\documentclass[11pt]{article}
\usepackage{amsmath}
\usepackage{amssymb}
\usepackage{amsthm}
\usepackage{mathrsfs}
\usepackage{latexsym}

\usepackage{amsthm}
\usepackage{graphicx}
\usepackage{setspace}
\usepackage{xy}
\input xy
\xyoption{all}
\theoremstyle{plain}
\newtheorem{theorem}{Theorem}[section]
\newtheorem{corollary}[theorem]{Corollary}
\newtheorem{proposition}[theorem]{Proposition}
\newtheorem{lemma}[theorem]{Lemma}

\theoremstyle{remark}
\newtheorem{remark}[theorem]{Remark}
\theoremstyle{definition}
\newtheorem{definition}[theorem]{Definition}
\newtheorem{construction}[theorem]{Construction}
\newtheorem{setting}[theorem]{Setting}
\newtheorem{condition}[theorem]{Condition}
\newtheorem{example}[theorem]{Example}
\newtheorem{proposition-definition}[theorem]{Proposition-Definition}

\newtheorem{convention}[theorem]{Convention}

\newcommand{\vdim}{\mathrm{vdim}}

\newcommand{\C}{\mathbb{C}}

\newcommand{\pp}{\mathbb{P}}
\newcommand{\N}{\mathbb{N}}
\newcommand{\Z}{\mathbb{Z}}
\newcommand{\Q}{\mathbb{Q}}
\newcommand{\vv}{\mathrm{virt}}

\title{Virtual pull-backs}
\author{Cristina Manolache}
\date{}

\begin{document}

\maketitle
\begin{abstract}
 We propose a generalization of Gysin maps for DM-type morphisms of stacks $F\to G$ that admit a perfect relative obstruction theory $E_{F/G}^{\bullet}$, which we call a ``virtual pull-back''. We prove functoriality properties of virtual pull-backs. As applications, we analyze Gromov-Witten invariants of blow-ups and projective bundles.
\end{abstract}

\tableofcontents
\section{Introduction}
Given a regular embedding of schemes $i:X\to Y$ one can construct a ``well-behaved'' morphism  (i.e. a bivariant class) $i^!:A_*(Y)\to A_*(X)$ by applying the Fulton-MacPherson construction (see \cite{f}, Chapter 6). The construction of this morphism can be generalized to the following setting. If $i:X\to Y$ is a closed embedding of schemes and $C_{X/Y}\subset E$ is a closed embedding of the normal cone of $i$ into a rank-$r$ vector bundle, then these data determine a ``well-behaved'' morphism $i^!:A_*(Y)\to A_{(*-r)}(X)$ (see Example 17.6.4 in \cite{f}).
\\The main result of this paper generalizes Fulton's example to stacks and to a larger class of morphisms. These generalizations allow us to view virtual classes (see \cite{lt}, \cite{bf}) as generalized pull-backs. This approach will allow us to deduce certain relations between Gromov-Witten invariants, which were the initial reasons for doing this work. 
\\It should be said that the idea is not entirely new, although we did not find this approach in the literature. The main inspiration point was the ``functoriality property of the Behrend-Fantechi class'' of Kim, Kresch and Pantev in \cite{K:01}. Also, a similar situation appears in Jun Li's $[M,N]^{\vv}$-construction (see \cite{Li:01}).
\\When this paper was in an advanced state, we have been informed of Hsin-Hong Lai's paper on ``Gromov-Witten invariants of blow-ups along manifolds with convex normal bundle'' (\cite{h}). The ideas there are basically the same, with a slightly different flavor. We hope, however, that our point of view will contribute to a clear understanding of this subject.
\\
\\In the first section we recall the notions of normal cones of Behrend-Fantechi and Kresch and prove that these two notions are canonically isomorphic. This allows us to use Kresch's ``deformation to the normal cone'' in the context of ``intrinsic normal cones''.
\\The main idea of the second section is to replace the normal sheaf $N_{F/G}$ of a morphism of stacks $f:F\to G$ with a ``virtual normal bundle''. The appropriate context for this is given by obstruction theories. Precisely, if $f$ is a DM-type morphism of Artin stacks (see Definition \ref{dm}) which admits a perfect relative obstruction theory $E_{F/G}^{\bullet}$ (see \cite{bf}), then we take the virtual normal bundle to be $h^1/h^0((E_{F/G}^{\vee})^{\bullet})$. Using this, we obtain a well-defined morphism $f^!:A_*(G)\to A_*(F)$, that we call a virtual pull-back. As a byproduct of our construction we obtain a generalized notion of virtual fundamental class which applies to some examples of Artin stacks.
\\In Section 3 we show that the virtual pull-back satisfies the usual compatibility conditions. Moreover, when we deal with stacks possessing virtual classes we prove that subject to a very natural compatibility relation between obstructions (see Definition \ref{compatib}) the construction gives a map that sends the virtual class of $G$ to the virtual class of $F$. The statement may also be seen as a generalization of the functoriality property in \cite{bf} and \cite{K:01}. 
\\As an application we provide the answer to a very natural question. Given a smooth projective variety $X$ and its blow-up $p:\tilde{X}\to X$ along some smooth projective subvariety, we would like to know when do certain Gromov-Witten invariants of $X$ and $\tilde{X}$ agree. More precisely, if we start with a given homology class $\beta\in A_1(X)$ and a collection of cohomology classes $\gamma_i\in A^*(X)$, then we can associate a ``lifted'' homology class in $A_1(\tilde{X})$ (see Definition \ref{lifting} for a precise statement) and cohomology classes $p^*\gamma_i\in A^*(\tilde{X})$. One could expect that the Gromov-Witten invariants associated to these data should be equal. This was first analyzed by Gathmann (\cite{G:01}) where $X$ was a convex space and $Y$ a point and by Hu (\cite{H:00}, \cite{H:02}) where it was treated the blow-up along points, curves and surfaces. Recently, it was shown and by Lai (\cite{h}) that (subject to a minor condition) the expectation is true  for genus zero Gromov-Witten invariants of blow-ups along subvarieties with convex normal bundles. Our idea is to show the equality of rational Gromov-Witten invariants for $X$ convex and then ``pull the relation back'' to an arbitrary $X$ (see Proposition \ref{main}). The statement we get should be compared with Theorem 1.6 in \cite{h}.
\\The second application concerns rational Gromov-Witten invariants of projective bundles $p_X:\pp_X(V)\to X$. These were studied by Ruan and Qin (\cite{ru}) where $X$ was taken to be a projective space and by Elezi (\cite{e1}, \cite{e2}) when $V$ is a split bundle and $X$ is a toric variety. Here, we analyze the map induced by $p_X$ between the corresponding moduli spaces of stable maps to $\pp_X(V)$ and $X$. 
\\We also show that a particular case of Costello's push-forward formula follows as an easy consequence of our formalism.
\paragraph{Notation and conventions.}
We work over a fixed ground field.
\\An Artin stack is an algebraic stack in the sense of \cite{lau} of finite type over the ground field. For simplicity we will call Deligne --Mumford stacks DM stacks. 
\\Unless otherwise specified we will try to respect the following convention: we will usually denote schemes by $X,\ Y,\ Z$, etc, Artin stacks by $F,\ G,\ H$, etc. and Artin stacks for which we know that they are not Deligne-Mumford stacks (such as the moduli space of genus-$g$ curves or vector bundle stacks) by gothic letters $\mathfrak{M}_{g},\ \mathfrak{E},\ \mathfrak{F}$, etc. 
\\By a commutative diagram of stacks we mean a 2-commutative diagram of stacks and by a cartesian diagram of stacks we mean a 2-cartesian diagram of stacks.
\\Chow groups for schemes are defined in the sense of \cite{f}; this definition has been extended to DM stacks (with $\Q$-coefficients) by Vistoli (\cite{v}) and to algebraic stacks (with $\Z$-coefficients) by Kresch (\cite{k}). We will consider Chow groups (of schemes/stacks) with $\mathbb{Q}$-coefficients.
\\For a fixed stack $F$ we denote by $\mathcal{D}_F$ the derived category of quasicoherent $\mathcal{O}_F$ modules. Unless otherwise specified, we will denote the derived functors ${\bf{L}}f^*$, ${\bf{R}}f_*$, ${\bf{R}}Hom$, etc. by $f^*$, $f_*$, $Hom$, etc.
\\For a fixed stack $F$ we denote by $L_F$ its cotangent complex defined in \cite{ol}.
 
\paragraph{Acknowledgments.} I would like to thank Dan Abramovich, Claudio Fontanari, Lothar G\"{o}ttsche, Andrew Kresch, Hsin-Hong Lai and Ravi Vakil for useful discussions.
\\I am grateful to my advisors Ionu\c{t} Ciocan-Fontanine and Barbara Fantechi for helpful discussions and suggestions. Special thanks are due to Ionu\c{t} for many helpful conversations and hospitality during a one month stay at the University of Minnesota. I am mostly grateful to Barbara and the anonymous referee for their huge amount of suggestions which led to a major improvement of the exposure.
\\I learned a lot during the Moduli Spaces program at the Mittag-Leffler Institute and at the University of Minnesota. It is a pleasure to thank both institutions for the wonderful research environment and support. 
\\I was partially supported by an ENIGMA grant.
\section{Preliminaries}
\subsection{Intrinsic normal cones to DM-type morphisms}
\subsubsection{Background}
\paragraph{DM-type morphisms}
\begin{definition}\label{dm}
 A morphism $f:F\to G$ of Artin stacks is called of Deligne-Mumford type (or shortly of DM-type) if for any morphism $V\to G$, with $V$ a scheme, $F\times_GV$ is a Deligne-Mumford stack.
\end{definition}
\begin{remark}Let us consider the following Cartesian diagram
\begin{equation}
 \xymatrix {F^{\prime} \ar[r]^{f^{\prime}}\ar[d] & G^{\prime}\ar[d]\\
             F\ar[r]^f& G.}
\end{equation}
If $f$ is a DM-type morphism, then $f^{\prime}$ is a DM-type morphism.
\end{remark}
\begin{remark}Let $f:F\to G$ be morphism of stacks and let $L_f$ be the relative cotangent complex. Then $f$ is of DM-type if and only if $L_f\in \mathcal{D}^{\leq0}_F$
\end{remark}
\paragraph{Obstruction Theories}

\begin{definition}
Let $E^{\bullet}\in\mathcal{D}^{\leq0}_F$. $E^{\bullet}$ is said to be of perfect amplitude if there exists $n\geq0$ such that $E^{\bullet}$ is locally isomorphic to $[E^{-n}\to...\to E^{0}]$, where $\forall i\in\{-n,...,0\}$, $E^i$ is a locally free sheaf.
\end{definition}

\begin{definition}Let $E^{\bullet}\in\mathcal{D}^{\leq0}_X$. Then a homomorphism $\Phi: E^{\bullet}\to L^{\bullet}$ in $\mathcal{D}_F$ is called an obstruction theory if $h^0(\Phi)$ is an isomorphism and $h^{-1}(\Phi)$ is surjective. If moreover, $E^{\bullet}$ is of perfect amplitude, then $E^{\bullet}$ is called a perfect obstruction theory.
\end{definition}
\begin{convention}Unless otherwise stated by a perfect obstruction theory we will always mean of perfect amplitude  contained in $[-1,0]$.
\end{convention}
\paragraph{Cone stacks}
\begin{definition}
Let $X$ be a scheme and $\mathcal{F}$ be a coherent sheaf on $X$. We call $C(\mathcal{F}):=Spec Sym(\mathcal{F})$ an abelian cone over $X$.
\end{definition}
As described in \cite{bf}, Section 1, every abelian cone $C(\mathcal{F})$ has a section $0:X\to C(\mathcal{F})$ and an $\mathbb{A}^1$-action.
\begin{definition} An $\mathbb{A}^1$-invariant subscheme of $C(\mathcal{F})$ that contains the zero section is called a cone over $X$.
\end{definition}
Similarly, Behrend and Fantechi define in \cite{bf} Section 1, abelian cone stacks and cone stacks. Let us recall the definition.
\begin{definition}
Let $F$ be a stack and let $E^{\bullet}:=[E^0\to E^1]$ be an element in $\mathcal{D}_F$ such that $E^{i}$ is an abelian sheaf for $i=0,1$ and $h^i({E^{\bullet}})$ is coherent for $i=0,1$. We call the stack quotient $[E^1/E^0]$ (in the sense of \cite{bf} Section 2) an abelian cone stack over stack $F$ .
\\A cone stack is a closed substack of an abelian cone stack invariant under the action of $\mathbb{A}^1$ and containing the zero section.
\end{definition}
\begin{convention}\label{stack}From now on, unless otherwise stated, by cones we will mean cone-stacks.
\end{convention}
\begin{example}(i) Let $i:X\to Y$ be a closed embedding of schemes. If $\mathcal{I}$ denotes the ideal sheaf of $X$ in $Y$, then $N_{X/Y}=Spec Sym\ \mathcal{I}/\mathcal{I}^2$ is called the normal sheaf of $X$ in $Y$ and $C_{X/Y}:=Spec\oplus_{k\geq 0} \mathcal{I}^k/\mathcal{I}^{k+1}\hookrightarrow N_{X/Y}$ is called the normal cone of $X$ in $Y$.
\\(ii) If $f:F\to G$ is a local immersion of DM-stacks, then Vistoli defines (see \cite{v}, Definition 1.20) the normal cone to $f$ as described below. Let us consider a commutative diagram
\begin{equation}\label{vi}
 \xymatrix {U \ar[r]^{\tilde{f}}\ar[d] & V\ar[d]\\
             F\ar[r]^f& G}
\end{equation}
with $U$, $V$ schemes, the upper horizontal arrow a closed immersion and the vertical arrows \'{e}tale. Then $C_{F/G}$ is the cone obtained by descent from $C_{U/V}$.
\\Note that $C_{F/G}\hookrightarrow N_{F/G}=SpecSym\ h^{-1}(L_{F/G})$.
\end{example}
\subsubsection{Intrinsic normal cones}
\begin{definition}\label{normsheaf}
 Let $f:F\to G$ be a DM-type morphism and let $L_{F/G}\in$ ob $\mathcal{D}(\mathcal{O}_{F})$ be the cotangent complex. Then we denote the stack $$h^1/h^0(L_{F/G}^{\vee}):=h^1/h^0(\tau_{[0,1]}(L_{F/G}^{\vee}))$$ (see \cite{bf}) by $\mathfrak{N}_{F/G}$ and we call it the \emph{intrinsic normal sheaf}.
 \end{definition}
\begin{proposition}(Behrend-Fantechi) Let us consider diagram (\ref{vi}) with $F$ a Deligne-Mumford stack, the upper horizontal arrow a closed immersion, $U\to F$ an \'{e}tale morphism and $V\to G$ a smooth morphism. Then for any $U$ and $V$ as above, there exists a unique cone-stack $\mathfrak{C}_{F/G}\subset\mathfrak{N}_{F/G}$ such that $\mathfrak{C}_{F/G}\times_FU=[C_{U/V}/\tilde{f}^*T_{V/G}]$.
\end{proposition}
\begin{definition}\label{befa}We call $\mathfrak{C}_{F/G}$ the \emph{intrinsic normal cone to $f$}.
\end{definition}
\begin{remark} The notion of intrinsic normal cone to a morphism $F\to G$ from a Deligne-Mumford stack to an Artin stack has been introduced in \cite{bf} under the hypothesis $G$ is smooth. However, this hypothesis is not used in the construction of the cone. This restriction is only needed in \cite{bf} in order to define a virtual class (see \ref{extrasmooth} in the next section).
\end{remark}
\begin{lemma}\label{seqcones}
 Let \begin{equation*}
 \xymatrix {F^{\prime} \ar[r]^{f^{\prime}}\ar[d]_{p} & G^{\prime}\ar[d]^q\\
             F\ar[r]^f & G}
\end{equation*}
be a commutative diagram of Artin stacks with $f$ and $f^{\prime}$ of DM-type. Then, the natural morphism $p^*L_{F/G}\to L_{F^{\prime}/G^{\prime}}$ in \cite{ol} induces morphism of abelian cone stacks $$\beta:\mathfrak{N}_{F^{\prime}/G^{\prime}}\rightarrow p^*\mathfrak{N}_{F/G}.$$
\end{lemma}
\begin{proof}
The morphism $p^*L_{F/G}\to L_{F^{\prime}/G^{\prime}}$ induces a morphism of abelian cone stacks
\begin{equation}\label{ceamuitat}h^1/h^0(L_{F^{\prime}/G^{\prime}}^{\vee})\rightarrow h^1/h^0((p^*L_{F/G})^{\vee}).
\end{equation} 
Using simplicial resolutions we can represent $L_{F/G}$ by a complex of vector bundles $[...\stackrel{}{\rightarrow} E^{-2}\stackrel{}{\rightarrow} E^{-1}\stackrel{}{\rightarrow} E^0]$ of amplitude $[-\infty, 0]$. By \cite{bf} we have that \begin{equation*}
h^1/h^0((p^*L_{F/G})^{\vee})\simeq h^1/h^0( (\tau_{[-1,0]} p^*L_{F/G})^{\vee}).\end{equation*}
Let us denote by $Coker$ the Cokernel of the map $E^{-2}\to E^{-1}$. We have that $p^*Coker$ is the Cokernel of the map $p^*E^{-2}\to p^*E^{-1}$. This shows that 
\begin{equation}\label{tor}
h^1/h^0( (\tau_{[-1,0]} p^*L_{F/G})^{\vee})=\frac{C(p^*Coker)}{C(p^*E^0)}\simeq \frac{p^*C(Coker)}{p^*C(E^0)}\simeq p^*h^1/h^0({L_{F/G}}^{\vee}).
\end{equation}
Let us now conclude the proof. The isomorphism in \ref{tor} together with the morphism in \ref{ceamuitat} gives an ismorphism $$h^1/h^0(L_{F^{\prime}/G^{\prime}}^{\vee})\rightarrow p^*h^1/h^0(L_{F/G}^{\vee}).$$
\end{proof}
\begin{remark}\label{etale} In notations as in \ref{seqcones} let $p:F^{\prime}\to F$ be an \'etale morphism, $G^{\prime}=G$ and $q$ the identity morphism. Then, $\beta:\mathfrak{N}_{F^{\prime}/G}\rightarrow p^*\mathfrak{N}_{F/G}$ is an isomorphism. To see this let us consider the distinguished triangle $$p^*L_{F/G}\to L_{F^{\prime}/F}\to L_{F^{\prime}/G}\to p^*L_{F/G}[1].$$ The claim follows from the fact that $L_{F^{\prime}/F}=0$.
  
\end{remark}
\begin{proposition}\label{dmincl}
 Let \begin{equation*}
 \xymatrix {F^{\prime} \ar[r]^{f^{\prime}}\ar[d]_{p} & G^{\prime}\ar[d]^q\\
             F\ar[r]^f & G}
\end{equation*}
be a commutative diagram with $F$, $F^{\prime}$ DM-stacks stacks. Then, the morphism of Lemma \ref{seqcones} induces a morphism of cone stacks $\alpha:\mathfrak{C}_{F^{\prime}/G^{\prime}}\rightarrow p^*\mathfrak{C}_{F/G}$. If the diagram is cartesian, then $\alpha$ is a closed immersion. If moreover, $q$ is flat, then $\alpha$ is an isomorphism.
\end{proposition}
\begin{proof}
This is a generalization of Proposition 7.1 in \cite{bf} where the authors treat the case $G$ and $G^{\prime}$ are smooth. We will prove it in several steps.
\\\textit{Step 0.} If $f$ and $f^{\prime}$ are local embeddings of DM-stacks the claim follows from \cite{v} Section 1.
\\\textit{Step 1.} Given $F^{\prime}\stackrel{p}{\rightarrow} F\stackrel{f}{\rightarrow}G$ morphisms we show that the natural morphism $\mathfrak{N}_{F^{\prime}/G}\to p^*\mathfrak{N}_{F/G}$ induces a morphism $\mathfrak{C}_{F^{\prime}/G}\to p^*\mathfrak{C}_{F/G}$. We can check the statement locally. For this, let $r:U\to F$ be an \'etale affine chart and let $M$ be a scheme such that $j:U\hookrightarrow M$ is a closed embedding. Moreover, we can choose $M$ smooth over $G$ such that the following diagram 
\begin{equation*}
 \xymatrix{U\ar[r]^j\ar[d]_r&M\ar[d]\\
F\ar[r]\ar[r]&G}
\end{equation*}
commutes. In the same way we choose $N$ smooth over $M$ such that $U^{\prime}:=U\times_{F}F^{\prime}\hookrightarrow N$ is a closed embedding. Putting all together we have a commutative diagram
\begin{equation*}
 \xymatrix{&&N\ar[d]^{\pi}\\
&&M\ar[d]\\
U^{\prime}\ar[r]^s\ar[rruu]^i&U\ar[r]\ar[ru]^j&G}
\end{equation*}
with $i$ and $j$ closed embeddings. Using Step 0 for these maps we obtain a morphism $C_{U^{\prime}/N}\to s^*C_{U/M}$. On the other hand we have a morphism $T_{N/G}\to \pi^*T_{M/G}$.  From the commutative diagram
\begin{equation*}
 \xymatrix {i^*T_{N/G} \ar[r]\ar[d]&C_{U^{\prime}/N}\ar[d]\\
             s^*j^*T_{M/G} \ar[r]&s^*C_{U/M}}
\end{equation*} 
we obtain a morphism $[C_{U^{\prime}/N}/i^*T_{N/G}]\to [s^*C_{U/M}/s^*j^*T_{M/G}]$ and therefore the conclusion.
\\\textit{Step 2.} Let us first treat the case in which the given diagram is cartesian. From Lemma \ref{seqcones} is is enough to show that the natural morphism $\beta:\mathfrak{N}_{F^{\prime}/G^{\prime}}\rightarrow p^*\mathfrak{N}_{F/G}$ restricts to $\beta:\mathfrak{C}_{F^{\prime}/G^{\prime}}\rightarrow p^*\mathfrak{C}_{F/G}$. This statement can be checked locally. As before, let $r:U\to F$ be an affine chart, $V$ a scheme such that $i:U\hookrightarrow V$ is a closed embedding and $V$ smooth over $G$. Let us now consider $V^{\prime}=V\times_GG^{\prime}$. Then we have the following cartesian diagram
\begin{equation}\label{step}
 \xymatrix {U^{\prime} \ar[r]\ar[d]_s&V^{\prime}\ar[r]\ar[d] & G^{\prime}\ar[d]^q\\
             U\ar[r]^i&V\ar[r] & G.}
\end{equation}
As $U\to V$ is a closed embedding, so is $U^{\prime}\to V^{\prime}$. Using Step 0, we obtain a closed embedding of cones $\alpha:[C_{U^{\prime}/V^{\prime}}/(T_{V^{\prime}/G^{\prime}})_{|U^{\prime}}]\rightarrow [s^*C_{U/V}/(T_{V^{\prime}/G^{\prime}})_{|U^{\prime}}]$. From the isomorphism $(T_{V^{\prime}/G^{\prime}})_{|U^{\prime}}\simeq s^*i^*T_{V/G}$ we obtain a closed embedding $C_{U^{\prime}/G^{\prime}}\rightarrow s^*C_{U/G}$. 
\\If moreover, $q$ is flat, the proof follows from the corresponding statement in Step 0.
\\\textit{Step 3.} Let us consider $F^{\prime\prime}:=F\times_GG^{\prime}$ with canonical maps $p^{\prime}:F^{\prime}\to F^{\prime\prime}$ and $p^{\prime\prime}:F^{\prime\prime}\to F$ which satisfy $p=p^{\prime\prime}\circ p^{\prime}$. By Step 2, we have a morphism $\mathfrak{C}_{F^{\prime\prime}/G^{\prime}}\rightarrow {p^{\prime\prime}}^*\mathfrak{C}_{F/G}$ and by Step 1 we have a natural morphism $\mathfrak{C}_{F^{\prime}/G^{\prime}}\rightarrow{p^{\prime}}^*\mathfrak{C}_{F^{\prime\prime}/G^{\prime}}$. Composing the two morphisms we obtain a morphism $\mathfrak{C}_{F^{\prime}/G^{\prime}}\rightarrow p^*\mathfrak{C}_{F/G}$.
\end{proof}
\begin{remark}\label{injsurj}
Let us consider diagram \ref{step}. By  \cite{bf} a morphism of cones $C_{U^{\prime}/V^{\prime}}\to s^*C_{U/V}$ is a closed immersion if and only if the induced morphism $N_{U^{\prime}/V^{\prime}}\to s^*N_{U/V}$ is a closed immersion. This shows that $\mathfrak{C}_{U^{\prime}/G^{\prime}}\rightarrow p^*\mathfrak{C}_{U/G}$ is a closed immersion if and only if $\mathfrak{N}_{U^{\prime}/G^{\prime}}\rightarrow p^*\mathfrak{N}_{U/G}$ is a closed immersion. As being a closed immersion can be checked (\'etale) locally we see that $\mathfrak{C}_{F^{\prime}/G^{\prime}}\rightarrow p^*\mathfrak{C}_{F/G}$ is a closed immersion if and only if $\mathfrak{N}_{F^{\prime}/G^{\prime}}\rightarrow p^*\mathfrak{N}_{F/G}$ is a closed immersion.
\end{remark}

In the following we generalize the notion of intrinsic normal cone to a DM-type morphism $f:F\to G$ to the case $F$ is an Artin stack (not necessarily a DM-stack).

\begin{construction}\label{locart} Let us consider the following commutative diagram
\begin{equation}\label{artincones}
 \xymatrix {U\ar[d]_r\ar[rd]\\
             F\ar[r]^f& G}
\end{equation}
with $f$ a DM-type morphism, $U$ a scheme and $r$ a smooth morphism. By Lemma \ref{seqcones} we have a morphism 
\begin{equation}
\gamma:\mathfrak{N}_{U/G}\to r^*\mathfrak{N}_{F/G}.
\end{equation}
Let us consider the restriction $\tilde{\gamma}:\mathfrak{C}_{U/G}\to r^*\mathfrak{N}_{F/G}$. We denote the image of $\tilde{\gamma}$ by $\mathfrak{C}_{F/G}|_U$ and we call it the local normal cone on $U$ of $F$ to $G$ .
\end{construction}

Let us now show that local normal cones glue. For this, we need the following easy lemma.
\begin{lemma}
 Let us consider the following commutative diagram
\begin{equation*}
 \xymatrix {U^{\prime}\ar[d]_{r^{\prime}}\ar[rdd]\\U\ar[d]_r\ar[rd]\\
             F\ar[r]^f& G}
\end{equation*}
with $r$ and $r^{\prime}$ smooth morphisms. Then $\mathfrak{C}_{F/G}|_U\times_UU^{\prime}$ is naturally isomorphic to $\mathfrak{C}_{F/G}|_{U^{\prime}}$.
\end{lemma}
\begin{proof}
The claim easily reduces to showing that the following diagram is commutative
\begin{equation*}
 \xymatrix{\mathfrak{C}_{U^{\prime}/G}\ar[r]\ar[d]&(r\circ r^{\prime})^*\mathfrak{N}_{F/G}\ar[d]
\\{r^{\prime}}^*\mathfrak{C}_{U/G}\ar[r]&(r\circ r^{\prime})^*\mathfrak{N}_{F/G}}
\end{equation*}
and this is obvious from the corresponding diagram between normal sheaves.
\end{proof}
This lemma shows that there exists a unique closed subcone $\mathfrak{C}_{F/G}\hookrightarrow \mathfrak{N}_{F/G}$ such that for every diagram (\ref{artincones}) we have $\mathfrak{C}_{F/G}\times_FU=\mathfrak{C}_{F/G}|_U$.
\begin{definition}\label{coneartinstack}
The cone $C_{F/G}$ is called the intrinsic normal cone of $f$, or when there is no risk of confusion the intrinsic normal cone of $F$ to $G$.
\end{definition}
\begin{remark}
Let us consider diagram (\ref{artincones}) with $F$ a DM-stack and $r:U\to F$ an \'etale morphism.  By restricting the natural isomorphism $\mathfrak{N}_{U/G}\to r^*\mathfrak{N}_{F/G}$ to $\mathfrak{C}_{U/G}$ (see Remark \ref{etale}) we obtain an isomorphism of cones $\mathfrak{C}_{U/G}\to r^*\mathfrak{C}_{F/G}$. This shows that when $F$ is a DM-stack $C_f$ of Definition \ref{coneartinstack} is canonically isomorphic to $C_f$ of Definition \ref{befa}. 
\end{remark}
\begin{remark}
 Let us consider diagram (\ref{artincones}), with $r$ a smooth morphism. By Proposition \ref{dmincl} and Remark \ref{injsurj} we obtain a closed embedding $\mathfrak{C}_{U/F}\to\mathfrak{C}_{U/G}$. This shows that the sequence $0\to\mathfrak{N}_{U/F}\to\mathfrak{N}_{U/G}\to\mathfrak{N}_{F/G}|_U\to 0$ induces the exact sequence of cones (in the sense of Definition 1.12 in \cite{bf})
\begin{equation}\label{artco}
0\to\mathfrak{N}_{U/F}\to\mathfrak{C}_{U/G}\to\mathfrak{C}_{F/G}|_U\to 0.
\end{equation}
\end{remark}
\begin{remark} Alternatively to Construction \ref{locart} one can define $\mathfrak{C}_{F/G}|_U$ by the sequence \ref{artco}.
\end{remark}

\begin{proposition}\label{pullbackofcones}
 Let \begin{equation*}
 \xymatrix {F^{\prime} \ar[r]^{f^{\prime}}\ar[d]_{p} & G^{\prime}\ar[d]^q\\
             F\ar[r]^f & G}
\end{equation*}
be a commutative diagram of Artin stacks with $f$ of DM-type. Then, there is an induced morphism of cone stacks $\alpha:\mathfrak{C}_{F^{\prime}/G^{\prime}}\rightarrow p^*\mathfrak{C}_{F/G}$. If moreover, the diagram is cartesian, then $\alpha$ is a closed immersion. If $q$ is flat, then $\alpha$ is an isomorphism.
\end{proposition}
\begin{proof} Let $U\to F$ a smooth scheme over $F$ and $U^{\prime}:=U\times_FF^{\prime}$. Consider $U^{\prime}=U\times_FF^{\prime}$. From the commutative diagram
\begin{equation*}
 \xymatrix {U^{\prime} \ar[r]\ar[d]_q & F^{\prime}\ar[d]\ar[r] & G^{\prime}\ar[d]^q\\
             U\ar[r]&F\ar[r] & G}
\end{equation*}
and Proposition \ref{dmincl} we obtain a morphism
\begin{equation}\label{artc}
\mathfrak{C}_{U^{\prime}/G^{\prime}}\rightarrow q^*\mathfrak{C}_{U/G}. 
\end{equation}
Using sequence (\ref{artco}) for $\mathfrak{C}_{F/G}|_U$ and $\mathfrak{C}_{F^{\prime}/G^{\prime}}|_{U^{\prime}}$ we see that the morphism (\ref{artc}) induces a morphism $\mathfrak{C}_{F^{\prime}/G^{\prime}}|_{U^{\prime}}\to q^* \mathfrak{C}_{F/G}|_U$ which glues to a morphism $\mathfrak{C}_{F^{\prime}/G^{\prime}}\rightarrow q^*\mathfrak{C}_{F/G}$ from the corresponding morphism between normal sheaves.
\\If the diagram is cartesian and if moreover, $q$ is flat, the proof follows similarly.
\end{proof}
\subsection{Normal cones to DM-type morphisms} 
A key tool is Kresch's notion of normal cone of a morphism, which we now recall.
Let $f:F\to G$ be a morphism of DM-type. The normal cone of $f$, denoted $C_f$ or $C_{F/G}$ was defined by Kresch in \cite{k}, section 5.1 under the assumption $f$ representable and locally separated; it is a cone stack over $F$. 
In \cite{K:03}, Section 5.1 and in the proof of Proposition 1 in \cite{K:01}, Kresch mentions that the definition of $C_f$ and its abelian hull $N_f$ extends to DM-type morphisms. We spell out the definition.
\begin{lemma}\label{reducetoincl}Let $f:F\to G$ be a DM-type morphism of Artin stacks. Then one can construct a commutative diagram (not unique)
\begin{equation}
 \xymatrix {U \ar[r]^{\tilde{f}}\ar[d]& V\ar[d]\\
             F\ar[r]^f& G}
\end{equation}
where $U$ and $V$ are schemes, the vertical arrows are smooth surjective and the top arrow $U\to V$ is  a closed immersion. 
\end{lemma}
\begin{proof}Let $W$ be a smooth atlas of $G$. As $f$ is a DM-type morphism $F\times_GW$ is a DM-stack. Let $U$ be an affine \'etale atlas of $F\times_GW$. Then, there exists a smooth scheme $M$ such that $U\hookrightarrow M$ is a closed embedding of schemes. Taking $V$ to be $M\times W$, we obtain the following commutative diagram with the vertical arrows smooth morphisms and the natural map $\tilde{f}:U\to V$ a closed immersion
\begin{equation}\label{locsepar}\xymatrix {U \ar[r]^-{\tilde{f}}\ar[d]_{\acute{e}t} & V:=M\times W\ar[d]\\
F\times_GW\ar[d]\ar[r]&W\ar[d]\\
             F\ar[r]^f& G.}
\end{equation}
\end{proof}
\begin{lemma}Let $R:=U\times_{F}U$ and $S:=V\times_{G}V$, where $U$ and $V$ are defined in the proof of the previous Lemma. Then the natural map $R\to S$ is a locally closed immersion.
\end{lemma}
\begin{proof}
We can factor the morphism $R\to S$ as $R\to U\times_GU\to S=V\times_GV$. The last map is a closed immersion. Let us now show that the first map is a locally closed immersion. But this follows easily from Proposition \ref{pullbackofcones} and the fact that the following diagram is Cartesian
\begin{equation}
 \xymatrix {U\times_FU \ar[r]\ar[d]& U\times_GU\ar[d]\\
             F\ar[r]^{\Delta}&F\times_GF.}
\end{equation}
\end{proof}
\begin{proposition} (Kresch) Let us consider the cone $C_{R/S}$. There are natural morphisms making $C_{R/S}\rightrightarrows C_{U/V}$ into a smooth groupoid in the category of schemes.
\end{proposition}
\begin{proof} (Sketch) Let $q_1,\ q_2:S\to V$ be the obvious projections. Then we have natural maps $$R=U\times_F U\to U\times_G U\to U\times_GV\simeq U\times_VS,$$ the last isomorphism depending on $q_i$. These maps induce natural maps $s_1,\ s_2:C_{R/S}\to C_{U/V}$
\begin{equation}
 C_{R/S}\rightrightarrows (C_{U\times_VS/S})\times_{U\times_VS}R\simeq C_{U/V}\times_UR\rightarrow  C_{U/V}.
\end{equation}
In the same manner as in \cite{K:03} Section 5.1 the maps $s_i$ are smooth and determine a groupoid.
 \end{proof}
In a completely analogous manner one can define a groupoid $[N_{R/S}\rightrightarrows N_{U/V}]$, where  $N_{R/S}$, $N_{U/V}$ are the normal sheaves (where the normal sheaf $N_{R/S}$ is the abelian hull of the normal cone $C_{R/S}$ of \cite{v}, Definition 1.20). This groupoid defines a stack that we denote $N_{F/G}$.  
\begin{definition}\label{kresch}
 Let $C_{F/G}$ be the stack associated to the groupoid $[C_{R/S}\rightrightarrows C_{U/V}]$ and $N_{F/G}$ the stack associated to the groupoid $[N_{R/S}\rightrightarrows N_{U/V}]$. We call $C_{F/G}$ the normal cone of $f$ and $N_{F/G}$ the normal sheaf of $f$.
\end{definition}
\begin{theorem}\label{family} (Kresch)
 Let $f:F\to G$ be a DM-type morphism of Artin stacks. One can define a deformation space, i.e. a flat morphism $M^{\circ}_FG\to\pp^1$ with general fibre $G$ and special fibre the normal cone $C_{F/G}$. Moreover, for any cartesian diagram
\begin{equation*}
\xymatrix{F^{\prime}\ar[r]^{f^{\prime}}\ar[d]&G^{\prime}\ar[d]\\
F\ar[r]^f&G}
\end{equation*}
there exists an induced morphism $M^{\circ}_{F^{\prime}}G^{\prime}\to M^{\circ}_FG$ that fits into a cartesian diagram
\begin{equation*}
\xymatrix{C_{F^{\prime}/G^{\prime}}\ar[r]\ar[d]&M^{\circ}_{F^{\prime}}G^{\prime}\ar[d]\\
C_{F/G}\ar[r]&M^{\circ}_FG}
\end{equation*}
\end{theorem}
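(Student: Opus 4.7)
\medskip
\noindent\textbf{Overall strategy.} My plan is to imitate Fulton's classical deformation to the normal cone at the level of schemes and then descend the construction along the groupoid presentation that was already used to define $C_{F/G}$. Lemma \ref{inc} allows me to work with either the Kresch-style groupoid $[C_{R/S}\rightrightarrows C_{U/V}]$ or the intrinsic normal cone, so I can keep the groupoid picture throughout.

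\medskip
\noindent\textbf{The scheme case.} For a locally closed immersion $U\hookrightarrow V$ of schemes I take
\[
M^{\circ}_U V \;:=\; \mathrm{Bl}_{U\times\{\infty\}}(V\times\pp^1)\;\setminus\;\widetilde{V\times\{\infty\}},
\]
where the tilde denotes strict transform. Standard facts give flatness over $\pp^1$, generic fibre $V$, and special fibre $C_{U/V}$. The universal property of the blow-up gives, for any cartesian square $U'=U\times_V V'$, a canonical morphism $M^{\circ}_{U'}V'\to M^{\circ}_U V$, and the square obtained by restricting to the fibre over $\infty$ is cartesian since the scheme-theoretic preimage of $\infty\in\pp^1$ on either side is exactly the respective normal cone. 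I would also record that if $V'\to V$ is smooth and the square is cartesian, then so is the induced map $M^{\circ}_{U'}V'\to M^{\circ}_U V$ (blowing up commutes with smooth base change), which is what I need for the descent step below.

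\medskip
\noindent\textbf{Descent via the groupoid.} I would pick a presentation as in diagram (\ref{locsepar}) and, setting $R=U\times_F U$, $S=V\times_G V$, I would form the scheme-level deformations $M^{\circ}_U V$ and $M^{\circ}_R S$. The two projections $R\rightrightarrows U$ and $S\rightrightarrows V$ give cartesian squares (the former over the latter), and the scheme-level functoriality yields two smooth morphisms $M^{\circ}_R S\rightrightarrows M^{\circ}_U V$; similarly the identity section, composition and inverse of the groupoid are transported by naturality of blow-ups. Thus $[M^{\circ}_R S\rightrightarrows M^{\circ}_U V]$ is a groupoid in schemes whose source and target are smooth, and I define $M^{\circ}_F G$ as its associated Artin stack. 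The projection to $\pp^1$ is flat because flatness is smooth-local, and the fibre over any point of $\mathbb{A}^1\subset\pp^1$ is the quotient of $[S\rightrightarrows V]$, i.e.\ $G$, while the fibre over $\infty$ is $[C_{R/S}\rightrightarrows C_{U/V}]=C_{F/G}$ by Lemma \ref{inc}. Independence of the presentation is proved by passing to a common refinement of two presentations and checking that the resulting deformations are canonically isomorphic, using the scheme-level functoriality.

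\medskip
\noindent\textbf{Functoriality for cartesian squares, and the main obstacle.} Given $F'=F\times_G G'$, I would build a compatible presentation of $F'\to G'$ by choosing a smooth atlas $W'\to G'$ lifting $W\to G$, setting $V'=M\times W'$ and $U'=U\times_V V'$; the scheme-level functoriality produces compatible morphisms $M^{\circ}_{U'}V'\to M^{\circ}_U V$ and $M^{\circ}_{R'}S'\to M^{\circ}_R S$, which descend to the required morphism $M^{\circ}_{F'}G'\to M^{\circ}_F G$. The cartesian square of cones then falls out by restricting to the fibre over $\infty$, where cartesianness is already known on the presenting schemes. The most delicate point, and where I expect to spend most of the work, is justifying that the groupoid axioms (and in particular smoothness of the two source/target maps and cartesianness of the composition square) are preserved by $M^{\circ}$; since blow-up is only well-behaved under smooth base change, the whole argument hinges on the fact that the groupoid maps $S\rightrightarrows V$ are smooth and that the immersion $R\hookrightarrow S$ in diagram (\ref{locsepar}) was arranged to be compatible with these smooth maps. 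Unwinding this compatibility — and checking that it survives the independence-of-presentation argument — is the one technical step I would treat carefully rather than formally.
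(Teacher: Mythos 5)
First, a remark on the comparison: the paper does not actually prove Theorem \ref{family} — its ``proof'' is a one-line citation to \cite{K:03}, \cite{k} and \cite{K:01} — so what you are really doing is reconstructing Kresch's construction. Your scheme-level definition of $M^{\circ}_U V$ via the blow-up, its flatness and fibres, and the functoriality over genuinely cartesian squares of schemes are all correct and are indeed the right starting point.

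The gap is in the descent step. You assert that the two squares formed by $R\rightrightarrows U$ lying over $S\rightrightarrows V$ are cartesian and then obtain the groupoid structure maps $M^{\circ}_R S\rightrightarrows M^{\circ}_U V$ by ``blow-up commutes with smooth base change''. But $U\times_{V,q_i}S\simeq U\times_G V$, whereas $R=U\times_F U$; the natural map $U\times_F U\to U\times_G V$ factors as a base change of the diagonal $F\to F\times_G F$ followed by a closed immersion, and is an isomorphism only when $f$ is a monomorphism. (For instance, for $F=\{0\}\sqcup\{0\}\to G=\mathbb{A}^1$ with the evident atlases, $R$ has two points while $U\times_V S$ has four.) So $R$ is only a locally closed subscheme of the fibre product, smooth base change does not apply, and the maps $M^{\circ}_R S\rightrightarrows M^{\circ}_U V$ are not yet defined by your argument. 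This is exactly the point the paper's own definition of $C_{F/G}$ has to negotiate: the maps $s_1,s_2$ are defined through the factorization $C_{R/S}\to C_{U\times_V S/S}\times_{U\times_V S}R\simeq C_{U/V}\times_U R\to C_{U/V}$, not by base change. What you need at the level of deformation spaces is the analogous map $M^{\circ}_R S\to M^{\circ}_{U\times_V S}S\simeq q_i^{*}M^{\circ}_U V$ attached to the nested immersions $R\hookrightarrow U\times_V S\hookrightarrow S$. Such a map does exist — on $M^{\circ}_R S$ the pullback of the ideal of $(U\times_V S)\times\{\infty\}$ coincides with that of $R\times\{\infty\}$, hence is invertible, so the universal property of the blow-up applies — but establishing it, together with the smoothness of the resulting maps so that $[M^{\circ}_R S\rightrightarrows M^{\circ}_U V]$ presents an algebraic stack, is precisely the nontrivial content of Kresch's construction. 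Your closing caveat gestures at a delicacy here, but the repair is not ``unwinding a compatibility'' of a base-change argument: it requires replacing that mechanism by this second, genuinely different functoriality of $M^{\circ}$.
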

\begin{proof}
A detailed proof can be found in \cite{k}, proposition 13.52 for locally closed immersions. Let us sketch the construction in the general case. As in the case of cones there are natural morphisms making $M^{\circ}_{R/S}\rightrightarrows M^{\circ}_{U/V}$ into a smooth groupoid. Let us denote by $M^{\circ}_{F/G}$ the stack (in general it is not algebraic) associated to the groupoid $[M^{\circ}_{R/S}\rightrightarrows M^{\circ}_{U/V}]$.
\\Let us consider the diagram in Lemma \ref{reducetoincl}. Taking $V^{\prime}:=V\times_GG^{\prime}$ and $U^{\prime}:=U\times_VV^{\prime}$ we obtain a similar diagram for $f^{\prime}$. This gives a morphism of groupoids $M^{\circ}_{F^{\prime}/G^{\prime}}\to M^{\circ}_{F/G}$ which induces a morphism of cones $C_{F^{\prime}/G^{\prime}}\to C_{F/G}$. The diagram we obtain it can be easily seen to be cartesian.
\end{proof}
\begin{remark}
From Theorem \ref{family} it follows that whenever $G$ is of pure dimension $r$, then $C_{F/G}$ is again of pure dimension $r$.
\end{remark}
\noindent Let us now compare the normal cone defined by Kresch with the intrinsic normal cone. The following Lemma in probably well-known to experts, but as we did not find it in the literature, we give a detailed proof for completeness.
\begin{proposition}\label{inc}
 If $f: F\to G$ is a DM-type morphism, then the cone stack $N_{F/G}$ of Definition \ref{kresch} is canonically isomorphic to the intrinsic normal sheaf $\mathfrak{N}_{F/G}$ of Definition \ref{normsheaf} and $C_{F/G}$ is canonically isomorphic to the intrinsic normal cone $\mathfrak{C}_{F/G}$ of Definition \ref{befa}.
\end{proposition}
\begin{proof} We divide the proof in several cases. In what follows we use the notation ``$=$'' for canonical isomorphisms.
\\
\\\textit{Case} 1. If $f$ is a closed embedding of schemes the statement is trivial.
\\
\\\textit{Case} 2. If $f$ is a local embedding of DM stacks, then $N_{F/G}$ and $C_{F/G}$ are obtained by descent on $F$ (see \cite{v}) and hence it suffices to check the statement locally. This shows that the statement follows by the first case.
\\
\\\textit{Case} 3. Let us show that $\mathfrak{N}_{F/G}=N_{F/G}$ when $F$ is a DM stack, $G$ an Artin stack and $f$ factors as
\begin{equation*}
 \xymatrix{&W\ar[d]\\
F\ar[ru]^i\ar[r]^f&G}
\end{equation*}
with $i$ a local embedding and M a smooth stack. Then $\mathfrak{N}_{F/G}=N_{F/W}/i^*T_{W/G}$. Let us take $p:U\to F$, $V$ \'{e}tale covers of $F$ and $W$ such that $f$ lifts to a closed embedding of schemes $\tilde{f}:U\to V$. Then, it suffices to show we have an isomorphism $$N_{U/V}\times_{N_{F/W}/i^*T_{W/G}}N_{U/V}\simeq N_{U\times_FU/V\times_GV}$$ compatible with the groupoid structure.
For this, we see the first term is isomorphic to $\tilde{f}^*T_{W/G}\times N_{U/V}\times_{N_{F/W}}N_{U/V}$ and using $V\to W$ is \'{e}tale we obtain the first term is isomorphic to $\tilde{f}^*T_{W/G}\times N_{U\times_FU/V}$. On the other hand, we know by the previous case that $N_{U\times_FU/V\times_GV}$ is canonically isomorphic to $\mathfrak{N}_{U\times_FU/V\times_GV}$ for which we know it is isomorphic to $\mathfrak{N}_{U\times_FU/V}\times \tilde{f}^* T_{V/G}$. This shows $\mathfrak{N}_{F/G}=N_{F/G}$.
\\
\\\textit{Case} 4. In general, we show $\mathfrak{N}_{F/G}=N_{F/G}$. The proof is very similar to Case 3, above. Let us consider diagram (\ref{locsepar}) with the diagonal map $g:U\to G$.  As $g$ factors a closed embedding followed by a smooth morphism we have by Case 3 that 
\begin{equation}\label{uptr}
 N_{U/G}=\mathfrak{N}_{U/G}= N_{U/V}/T_{V/G}|_{U}.
\end{equation}
In order to analyze the lower triangle of diagram (\ref{locsepar}), we consider the distinguished triangle of relative cotangent complexes
\begin{equation*}
 p^*L_{F/G}\to L_{U/G}\to L_{U/F}\to p^*L_{F/G}[1].
\end{equation*}
As $p: U\to F$ is smooth it is easy to see that we are in the conditions of Proposition 2.7 in \cite{bf} and thus we get a short exact sequence of intrinsic normal sheaves
\begin{equation}\label{downtr}
0\to \mathfrak{N}_{U/F} \to\mathfrak{N}_{U/G}\to p^*\mathfrak{N}_{F/G}\to 0.
\end{equation}
By (\ref{uptr}) and (\ref{downtr}), in a similar way as before we get \textit{local} isomorphisms 
\begin{equation}\label{hlr}N_{U/V}\times_{\mathfrak{N}_{U/G}}N_{U/V}\simeq N_{U/V\times_GV}.
\end{equation} 
Moreover, the same equations (\ref{uptr}) and (\ref{downtr}) give a smooth morphism of abelian cone stacks $N_{U/V}\to\mathfrak{N}_{F/G}$ and in a completely analogous fashion we get morphisms of abelian cone stacks $N_{U\times_F U/V\times_GV}\to N_{U/V}$. This shows we obtain a morphism of abelian cone stacks $$N_{U\times_F U/V\times_GV}\to N_{U/V}\times_{\mathfrak{N}_{F/G}}N_{U/V}.$$ By equation (\ref{hlr}), this morphism is a local isomorphism and thus we have an isomorphism $N_{U/V}\times_{\mathfrak{N}_{F/G}}N_{U/V}\simeq N_{U\times_F U/V\times_GV}$. Checking the diagram below is commutative  
\begin{equation*}
 \xymatrix{N_{U\times_F U/V\times_GV}\ar@<0.5ex>[r]\ar@<-0.5ex>[r]\ar[d]&N_{U/V}\ar[d]\\
N_{U/V}\times_{\mathfrak{N}_{F/G}}N_{U/V}\ar@<0.5ex>[r]\ar@<-0.5ex>[r]& N_{U/V}}
\end{equation*}
we obtain an isomorphism of groupoids and therefore the conclusion.
\\
\\\textit{Case} 5. By Case 4 above, it is enough to check that $C_{F/G}$ is canonically isomorphic to the relative intrinsic normal cone $\mathfrak{C}_{F/G}$ locally. For this, we look at the groupoid $[C_{U/V\times_GV}\rightrightarrows C_{U/V}]$ with the two maps obtained by replacing $F$ with $U$. It is easy to see that $N_{U/V\times_GV
}$ is isomorphic to $N_{U/V}\times\tilde{f}^*T_{V/G}$. Via this isomorphism, the two maps defining the groupoid are the projection and the natural action of $\tilde{f}^*T_{V/G}$ on $C_{U/V}$. This shows $C_{F/G}$ is locally isomorphic to $[C_{U/V}/\tilde{f}^*T_{V/G}]$ and therefore the claim follows.
\end{proof}

\begin{remark}
 By the above Lemma we are allowed to identify the normal cone to a morphism with the intrinsic normal cone. In particular, the above Lemma shows that Definition \ref{kresch} is independent of the choice of $U$ and $V$ in diagram (\ref{locsepar}). Although normal cones are cone stacks, we will use for simplicity the notation $C_{F/G}$ instead of $\mathfrak{C}_{F/G}$.  
\end{remark}
If $X$ is a scheme, $E$ is a vector bundle on $X$ and $i:X\to E$ is the zero section, then $C_{X/E}$ is naturally isomorphic to $E$. We prove a series of successive generalizations of this result.
\begin{example}
Let $G$ be a DM-stack, $E$ a vector bundle on $G$ and $G\to E$ the zero section. Then $C_{G/E}$ is canonically isomorphic to $E$.
\begin{proof}Let $V$ be an \'{e}tale atlas of $G$ and $E_V$ the pull-back of $E$ to $V$, then we can construct a commutative diagram as above and $C_{G/E}$ is obtained by descent from $C_{V/E_V}\simeq E_V$. This shows that $C_{G/E}$ is canonically isomorphic to $E$.
\end{proof}
\end{example}
\begin{example}Let $F\stackrel{f}\rightarrow G$ be a DM-type morphism and $p:E\to G$ a vector bundle on $G$. Let $i:G\to E$ be the zero section, and let $g:F\to E$ be $g:=i\circ f$. Then $C_{F/E}$ is canonically isomorphic to $C_{F/G}\times_Ff^*E$.
\begin{proof}Let us consider the distinguished triangles corresponding to $g$ and $f$ respectively. The morphism $i$ induces a morphism $i^*L_E\to L_G$ and therefore we obtain the following morphism of distinguished triangles
\begin{equation*}
\xymatrix{f^*i^*L_E\ar[r]\ar[d]&L_F\ar[r]\ar[d]&L_{F/E}\ar[r]\ar[d]& f^*i^*L_E[1]\ar[d]\\ f^*L_{G}\ar[r]& L_F\ar[r]\ar@<1ex>[u]& L_{F/G}\ar[r]& f^*L_{G}[1]}.
\end{equation*}
Using $p$ instead of $i$ we obtain in the same way a morphism $L_{F/G}\to L_{F/E}$ and thus we get a morphism $f^*L_{G/E}\oplus L_{F/G}\to L_{F/E}$. To show it is an isomorphism it suffices to show the statement locally. As we may assume $G$ is an affine scheme, it is easy to see that $i^*L_E=L_G\oplus E^{\vee}$. On the other hand, $L_{G/E}=[E^{\vee}\to0]$, where $E^{\vee}$ stays in degree $-1$ and therefore we reduced the problem to showing the triangle
\begin{equation*}
f^* L_{G}\oplus E^{\vee}\to L_F\to L_{F/G}\oplus [f^*E^{\vee}\to0]
\end{equation*}
is distinguished. But this follows trivially from the definition of the mapping cone. This shows 
that $h^1/h^0(L_{F/E}^{\vee})$ is isomorphic to $h^1/h^0(L_{F/G}^{\vee})\times_Fh^1/h^0(f^*L_{G/E}^{\vee})$. We have thus obtained $C_{F/E}$ is isomorphic to $C_{F/G}\times_F{f}^*E$.
\end{proof}
\end{example}
\begin{example}\label{vb}Let $F\stackrel{f}\rightarrow G$ be DM-type morphism, $\mathfrak{E}:=E^1/E^0$ a vector bundle stack on $G$. Let $G\stackrel{i}{\rightarrow}\mathfrak{E}$ denote the zero section. If $g:F\to G $ is the composition $F\stackrel{f}{\rightarrow}G\stackrel{i}{\rightarrow}\mathfrak{E}$, then $C_{F/G}$ is naturally isomorphic to $C_{F/G}\times_Ff^*\mathfrak{E}$
\begin{proof}Using the above factorization of the morphism $F\to G$, we see that $C_{F/\mathfrak{E}}=[C_{F/E^1}/E^0]$. Using the previous example for $C_{F/E^1}$, we obtain that the normal cone of $F$ in $\mathfrak{E}$ is isomorphic to $C_{F/G}\times_Ff^*\mathfrak{E}$.
\end{proof}
\end{example}
We include two examples in which the normal cone is a vector-bundle stack.
\begin{example} \label{flat} Let $F\to G$ be a smooth morphism of DM-stacks. Then $C_{F/G}$ is isomorphic to $[F/T_{F/G}]$, hence it is a vector bundle stack. 
\end{example}
\begin{example}Let $X\stackrel{f}\rightarrow Y$ be a morphism of smooth schemes. Then, $U$ and $V$ above can be taken to be $X$ and $X\times Y$ as below
\begin{equation*}
\xymatrix{X\ar[r]^-{id\times f}\ar[d]&{X\times Y}\ar[d]^{\pi_2}\\
X\ar[r]&Y} 
\end{equation*}
where $\pi_2$ is the projection on $Y$. It is then easy to see that the normal cone is $[N_{X/X\times Y}/T_{X}]$ that is a vector bundle stack.
\end{example}

\section{Construction}
In the following we will use a result of Kresch.
\begin{proposition}(\cite{K:03}, Proposition 5.3.2)\label{stratif}
Let $F$ admit a stratification by global quotients  (see \cite{K:03}, Definition 3.5.3). Then, for any \emph{vector bundle stack} $\mathfrak{E}$, we have a canonical isomorphism $s^*:A_*(F)\to A_*(\mathfrak{E})$ .
\end{proposition}
\begin{remark}
Every DM-stack admits a stratification by global quotients.
\\If $G$ admits a stratification by global quotients and $F\to G$ is a DM-type morphism then $F$ admits a stratification by global quotients.
\end{remark}
\subsection{Definition of virtual pull-backs}
\begin{condition} We say that a morphism $f:F\to G$ of algebraic stacks and a vector bundle stack $\mathfrak{E}\to F$ satisfy condition ($\star$) if
\begin{enumerate}
\item $f$ is of DM-type,
\item we have fixed a closed embedding $i:C_{f}\hookrightarrow \mathfrak{E}$.
\end{enumerate}
\end{condition}
\begin{convention}
Will say in short that the pair $(f,\mathfrak{E})$ satisfies condition ($\star$).
\end{convention}

\begin{remark}
Let us consider a Cartesian diagram
\begin{equation*}
 \xymatrix {F^{\prime} \ar[r]\ar[d]_{p} & G^{\prime}\ar[d]^q\\
             F\ar[r]^f & G.}
\end{equation*}
If $\mathfrak{E}$ is a vector bundle on $F$ such that $C_{F/G}\hookrightarrow \mathfrak{E}$ is a closed embedding, then $C_{F^{\prime}/G^{\prime}}\hookrightarrow p^*\mathfrak{E}$ is a closed embedding.
\end{remark}
\begin{construction}
Let $F$ be an Artin stack which admits a stratification by global quotient stacks and $\mathfrak{E}$ a vector bundle stack of (virtual) rank $n$ on $F$ such that $(f,\mathfrak{E})$ that satisfies condition $(\star)$ for $f$, we construct a pull-back map $f^{!}_\mathfrak{E}:A_*(G)\to A_{*-n}(F)$ as the composition
\begin{equation*}
 A_*(G)\stackrel{\sigma}\rightarrow A_*(C_{F/G})\stackrel{i_*}{\rightarrow}A_*(\mathfrak{E})\stackrel{s^*}{\rightarrow} A_{*-n}(F).
\end{equation*}
where
\begin{enumerate}
\item $\sigma$ is defined on the level of cycles by $\sigma(\sum n_i[V_i])=\sum n_i [C_{V_i\times_GF/V_i}]$ 
\item $i_*$ is the push-forward via the closed immersion $i$ 
\item $s^*$ is the morphism of Proposition \ref{stratif}.
\end{enumerate}
By Proposition \ref{pullbackofcones} we have a closed embedding of cones $$C_{V_i\times_GF/V_i}\hookrightarrow C_{F/G}.$$ The fact that $\sigma$ is well defined is a consequence of Theorem \ref{family} (see \cite{K:03} Section 3 for local immersions and Section 5 for the general case). 
 \\Going further, for any cartesian diagram
\begin{equation*}
 \xymatrix {F^{\prime} \ar[r]^{f^{\prime}}\ar[d]_{p} & G^{\prime}\ar[d]^q\\
             F\ar[r]^f & G}
\end{equation*}
such that $F^{\prime}$ admits a stratification by global quotient stacks and $\mathfrak{E}\to F$ satisfies condition $(\star)$ for $f$, let $f^{!}_{\mathfrak{E}}:A_*(G^{\prime})\to A_{*-n}(F^{\prime})$ be the composition
\begin{equation*}
 A_*(G^{\prime})\stackrel{\sigma}\rightarrow A_*(C_{F^{\prime}/G^{\prime}})\stackrel{\alpha}{\rightarrow} A_*(C_{F/G}\times_FF^{\prime})\stackrel{i_*}{\rightarrow}A_*(p^*\mathfrak{E})\stackrel{s^*}{\rightarrow}A_{*-n}(F^{\prime})
\end{equation*}
where $\alpha$ is the morphism from Proposition \ref{pullbackofcones}.
\end{construction}
\begin{definition}\label{vp}
In the notation above, we call $f_{\mathfrak{E}}^!: A_*(G)\to A_*(F)$ a \textit{virtual pull-back}. When there is no risk of confusion we will omit the index.
\end{definition}
\begin{remark}
In this remark we do not respect Convention \ref{stack}. If $E$ is a \emph{vector bundle} such that $(f,E)$ satisfies $(\star)$, then the above construction can be applied to any Artin stack $F$.  It is clear that in order to have $E$ a vector bundle $N_f$ must necessarily be a cone.
\\If $f$ is a locally closed embedding, then $N_f$ is a cone. Under this assumption if $E$ is a \emph{vector bundle} such that $(f,E)$ satisfies $(\star)$, then it is not necessary to ask $F$ to admit a stratification by global quotient stacks (see \cite{K:02}, Theorem 2.1.12 (vi)).
\end{remark}

\begin{remark}
Note that in case $X$, $Y$ are schemes such that $X$ is regularly embedded in $Y$, then the normal bundle of $X$ in $Y$ satisfies condition $\star$ and $i^!_{N_{X/Y}}$ is precisely the refined Gysin pull-back of \cite{f}, Section 6.2. We remark that the pull-back depends on the chosen bundle. For example, if $(i,E)$ satisfies condition $(\star)$ we can construct $i^!_{E\oplus E^{\prime}}$, where $E^{\prime}$ is any other vector bundle. These morphisms will be obviously different from each other.  
\end{remark}
\begin{remark}\label{fultflat}
 If $f:X\to Y$ is a smooth morphism of schemes, then by Example \ref{flat} $C_{X/Y}$ is a vector bundle stack and hence we can construct the  associated virtual pull-back $f^!_{C_{X/Y}}:A_*(Y)\to A_*(X)$. This has already been defined in \cite{K:03} and it is proved that the definition agrees with the usual flat pull-back (see e.g. \cite{f}).
\end{remark}
\begin{proposition}If $F\stackrel{f}\rightarrow G$ is a DM-type morphism and there exists a perfect relative obstruction theory $E_{F/G}^{\bullet}$, then condition $(\star)$ is fulfilled.
\\Conversely, if $F\stackrel{f}\rightarrow G$ is a morphism that satisfies condition ($\star$), then there exists a perfect obstruction theory $E^{\bullet}_{F/G}\to L_{F/G}$ such that $\mathfrak{E}=h^1/h^0({E^{\bullet}_{F/G}}^{\vee})$ which is unique up to quasi-isomorphism.
\end{proposition}
\begin{proof}
By Proposition \ref{inc}, the  normal sheaf $N_{F/G}$ is nothing but $\mathfrak{N}_{F/G}$, so the first statement follows from the definitions. Conversely, given $\mathfrak{E}$ a vector bundle stack with a closed embedding $i:C_{F/G}\to\mathfrak{E}$, we obtain an injective morphism between the abelian hulls of $C_{F/G}$ and $\mathfrak{E}$ which means an injective morphism of cones $\mathfrak{N}_{F/G}\to\mathfrak{E}$. On the other hand, giving a vector bundle stack $\mathfrak{E}$ is equivalent to giving a perfect complex $E^{\bullet}_{F/G}$ such that $\mathfrak{E}:=h^1/h^0({E^{\bullet}_{F/G}}^{\vee})$. By \cite{bf}, Theorem 4.5 we have a closed embedding of abelian cone stacks $$\mathfrak{N}_{F/G}\rightarrow h^1/h^0({E^{\bullet}_{F/G}}^{\vee})$$ if and only if ${E^{\bullet}_{F/G}}$ is an obstruction theory.
\end{proof}
\begin{corollary}\label{extrasmooth}
If $F\stackrel{f}\rightarrow G$ is a DM-type morphism such that there exists a perfect relative obstruction theory $E^{\bullet}_{F/G}$ and $G$ is a stack of pure dimension, then $f^!_{E^{\bullet}_{F/G}}([G])$ is a virtual class of $F$ in the sense of \cite{bf}.
\end{corollary}
\subsection{A fundamental example of Obstruction Theory}
The purpose of this section is to explain an example of obstruction theory which will play a fundamental role in the last section of this paper.

\begin{construction}\label{constr}
 Let $f:F\to G$ be a DM-type morphism and let $F$ and $G$ be DM-stacks having relative obstruction theories with respect to some smooth Artin stack $\mathfrak{M}$. Let us denote them by $E_{F/\mathfrak{M}}^{\bullet}$ and $E_{G/\mathfrak{M}}^{\bullet}$ respectively. Given a morphism $\varphi: f^*E_{G/\mathfrak{M}}^{\bullet}\to E_{F/\mathfrak{M}}^{\bullet}$ commuting with $f^*L_{G/\mathfrak{M}}\to L_{F/\mathfrak{M}}$, we construct a relative obstruction theory $E_{F/G}^{\bullet}$.
\\The morphism $f:F\to G$ induces a distinguished triangle of cotangent complexes
\begin{equation*}
 f^*L_{G/\mathfrak{M}}\to L_{F/\mathfrak{M}}\to L_{F/G}\to f^*L_{G/\mathfrak{M}}[1].
\end{equation*}
Similarly, $\varphi$ gives rise to a distinguished triangle 
\begin{equation}\label{obstr}
 f^*E_{G/\mathfrak{M}}^{\bullet}\stackrel{\varphi}{\rightarrow} E_{F/\mathfrak{M}}^{\bullet}\to E_{F/G}^{\bullet}\to f^*E_{G/\mathfrak{M}}[1]
\end{equation}

hence we have a morphism of distinguished triangles that induces the following morphism in cohomology
\begin{equation*}
\scriptsize
\xymatrix@C=0.43cm{h^{-1}(f^*E_{G/\mathfrak{M}}^{\bullet}) \ar[r]\ar[d] & h^{-1}(E_{F/\mathfrak{M}}^{\bullet})\ar[d]\ar[r]\ar[d] & h^{-1}(E_{F/G}^{\bullet})\ar[r]\ar[d] & h^0(f^*E_{G/\mathfrak{M}}^{\bullet}) \ar[r]\ar[d] & h^0(E_{F/\mathfrak{M}}^{\bullet})\ar[d]\ar[r]\ar[d] & h^0(E_{F/G}^{\bullet})\ar[d]\\h^{-1}(f^*L_{G/\mathfrak{M}}^{\bullet}) \ar[r] & h^{-1}(L_{F/\mathfrak{M}}^{\bullet})\ar[r] & h^{-1}(L_{F/G}^{\bullet})\ar[r]& h^0(f^*L_{G/\mathfrak{M}}^{\bullet}) \ar[r] & h^0(L_{F/\mathfrak{M}}^{\bullet})\ar[r] & h^0(L_{F/G}^{\bullet})}
\end{equation*}
We know that the first two vertical arrows are surjective and by the definition of obstruction theories we get by a simple diagram chase that $E_{F/G}^{\bullet}$ is also an obstruction theory.
\end{construction}
\begin{remark}
 Let us note that the morphism of mapping cones $E_{F/G}^{\bullet}\to L_{F/G}$ is not unique in $\mathcal{D}^{\leq0}_F$ and therefore this procedure gives many \emph{different} relative obstruction theories. 
\end{remark}

\begin{remark}\label{smooth}
 If $G$ is smooth over $\mathfrak{M}$ and $E_{G/\mathfrak{M}}^{\bullet}$ is trivial (i.e. $E_{G/\mathfrak{M}}^{\bullet}=L_{G/\mathfrak{M}}^{\bullet}$), then the above diagram shows that $E_{F/G}$ is perfect in $[-1,0]$.
\end{remark}

\begin{example}\label{easyfunct} A special case of this construction is when $F\to G$ is a locally closed immersion and $G$ is taken to be smooth over $\mathfrak{M}$. Taking $h^{-1}(f^*E_{G/\mathfrak{M}}^{\bullet})=0$ we obtain that $h^{-2}(E_{F/G}^{\bullet})=0$. This makes $E_{F/G}^{\bullet}$ into a perfect obstruction theory concentrated in degree $-1$ and consequently $\mathfrak{E}$ into a vector bundle.
\end{example}
Let us now motivate Definition \ref{vp}. For this, let us assume $E_{F/\mathfrak{M}}^{\bullet}$ and $E_{G/\mathfrak{M}}^{\bullet}$ are perfect in $[-1,0]$. Then on $F$ and $G$ we have well defined virtual classes $[F]^{\vv}$ and $[G]^{\vv}$ respectively and we will show in the following that $f^!_{\mathfrak{E}_{F/G}^{\vee}}$ sends the virtual class of $G$ to the virtual class of $F$. As remarked in the previous example, the situation is particularly nice when $G$ is taken to be smooth over $\mathfrak{M}$.
\begin{example} The basic case.
\\In the notation above, let us suppose $G$ is smooth and $F\stackrel{i}{\hookrightarrow}G$ is a closed substack and there exists a morphism $f^*L_G\stackrel{\varphi}{\rightarrow}E_{F}^{\bullet}$. Suppose $\mathfrak{M}$ has pure dimension. Let $E_{F/G}^{\bullet}$ be the cone of $\varphi$. By Construction \ref{constr} it is a perfect obstruction theory for $i$. Then we have 
\\(i) $(C_{F/G},E_{F/G}^{\bullet})$ induces the same virtual class on $F$ as $(C_{F/\mathfrak{M}},E_{F/\mathfrak{M}}^{\bullet})$.
\\(ii) The pull back defined by $E_{F/G}^{\bullet}$ respects the relation $$i^![G]=[F]^{\vv}.$$
\begin{proof}
As $G$ is smooth, the intrinsic normal cone $\mathfrak{C}_{F}$ defined in \cite{bf} is nothing but $[C_{F/G}/i^*T_{G/\mathfrak{M}}]$. Moreover, $i^*L_{G/\mathfrak{M}}^{\bullet}$ can be represented by a complex concentrated in $0$ and $E_{F/G}^{\bullet}$ by a complex concentrated in $-1$. By abuse of notation, we will indicate the corresponding sheaves by $i^*L_{G/\mathfrak{M}}$ and $E_{F/G}$ respectively. Taking the long exact cohomology sequence of the exact triangle (\ref{obstr}), we see that $E_{F/\mathfrak{M}}^{\bullet}$ is quasi isomorphic to $[E_{F/G}\to i^*L_{G/\mathfrak{M}}]$. Therefore the vector bundle stack $\mathfrak{E}_{F/\mathfrak{M}}:=h^1/h^0({({E_{F/\mathfrak{M}}^{\bullet}})}^{\vee})$ is equal to $[(E_{F/G})^{\vee}/i^*T_{G/\mathfrak{M}}]$. Thus we have the diagram with cartesian faces
\begin{equation*}
 \xymatrix @!=0.2pc{F \ar[rrr]\ar[rrd]\ar[dd] &&&C_{F/\mathfrak{M}}\ar[dd]\\
&&C_{F/G}\ar[dd]\ar[ru]\\
F\ar[rrr]\ar[rrd]&&&\mathfrak{E}_{F/\mathfrak{M}}\\
&&E_{F/G}\ar[ru]}
\end{equation*}
In other words, the morphism $A_*(\mathfrak{E}_{F/\mathfrak{M}})\to A_*(F)$ factorizes through $A_*(E_{F/G})$ as follows:
\begin{align*}
 A_*(\mathfrak{E}_{F})&\to A_*(E_{F/G}^{})\to A_*(F)\\
[C_{F/\mathfrak{M}}]&\mapsto [C_{F/G}]\ \mapsto[F]^{\vv}.
\end{align*}

For the second statement, we just have to note that by our definition $i^![G]=s^*([C_{F/G}])$, and by (i) is precisely $[F]^{\vv}$ as defined in \cite{bf}.
\end{proof}
\end{example}


\section{Basic properties}
Once we have defined a ``pull-back'', we want to show it has good properties. Due to the geometric properties of the normal cone (\ref{family}), the proofs follow essentially in the same way as the ones in \cite{f}. The fact that our pull-back defines a bivariant class is analogous to Example 17.6.4 in \cite{f}. The only point we need to be careful, is the functoriality property, where we need a compatibility condition between the vector bundle stacks that replace the normal bundles.

\begin{theorem} \label{relations}Consider a fibre diagram of Artin stacks
\begin{equation*}
 \xymatrix {F^{\prime\prime}\ar[r]\ar[d]_{q} & G^{\prime\prime}\ar[d]^p\\
F^{\prime} \ar[r]^{f^{\prime}}\ar[d]_{g} & G^{\prime}\ar[d]^h\\
             F\ar[r]^f & G}
\end{equation*}
and let us assume 
\begin{enumerate}
\item $F^{\prime}$, $F^{\prime\prime}$ admit stratifications by global quotient stacks,
\item ${\mathfrak{E}}$ is a vector bundle stack of rank $d$ such that $(f,{\mathfrak{E}})$ satisfies condition $(\star)$ for $f$.
\item $\mathfrak{E}$ is isomorphic to a global quotient $[E^1/E^0]$, with $E^1$, $E^0$ vector bundles on $F$.
 
\end{enumerate}
(i) (Push-forward) If $p$ is either a projective morphism of Artin stacks or a proper morphism of DM-stacks and $\alpha\in A_k(G^{\prime\prime})$, then $f^!_{\mathfrak{E}}p_*(\alpha)=q_*f^!_{\mathfrak{E}}\alpha$ in $A_{k-d}(F^{\prime})$.\\
(ii)\ (Pull-back) If $p$ is flat of relative dimension $n$ and $\alpha\in A_k(G^{\prime})$, then $f^!_{\mathfrak{E}}p^*(\alpha)=q^*f^!_{\mathfrak{E}}\alpha$ in $A_{k+n-d}(F^{\prime\prime})$\\
(iii)(Compatibility) If $\alpha\in A_k(G^{\prime\prime})$, then $f^{!}_{\mathfrak{E}}\alpha=f^{\prime!}_{g^*{\mathfrak{E}}}\alpha$ in $A_{k-d}(F^{\prime\prime})$.
\end{theorem}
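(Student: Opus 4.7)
The plan is to decompose $f^!_{\mathfrak{E}}$ into its three constituents from the Construction---the specialization map $\sigma$ to the normal cone, the push-forward by the closed immersion $C_{F/G}\hookrightarrow\mathfrak{E}$, and the zero-section Gysin map $s^\ast$ of the vector bundle stack---and then verify each of the three compatibilities (i), (ii), (iii) on each piece separately. The whole scheme closely follows \cite{f}, Chapter~6, but now transplanted to the Artin-stack setting via the deformation to the normal cone of Theorem~\ref{family} and Kresch's Chow theory.

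For (i), the functoriality clause of Theorem~\ref{family} supplies a morphism of deformation spaces $M^{\circ}_{F^{\prime\prime}}G^{\prime\prime}\to M^{\circ}_{F^{\prime}}G^{\prime}$ which is proper (since it is $p$ away from the special fibre and the natural base-change $C_{F^{\prime\prime}/G^{\prime\prime}}\to C_{F^{\prime}/G^{\prime}}$ on it) and whose generic/special fibre diagrams are cartesian. Combined with the usual fact that proper push-forward commutes with specialization along a flat family (cf.\ \cite{f}, Proposition~6.2(a)), this gives $\sigma\circ p_\ast=p_\ast\circ\sigma$. The push-forward $i_\ast$ trivially commutes with proper push-forward, and Kresch's zero-section Gysin map $s^\ast$ on a vector bundle stack is bivariant (\cite{K:03}, Proposition~5.3.2), so commutes with $p_\ast$. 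Composing the three commutations yields (i).

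For (ii) the same three pieces carry through with flat pull-back in place of proper push-forward: the deformation squares of Theorem~\ref{family} are cartesian, so $\sigma$ commutes with $p^\ast$; the closed push-forward into $\mathfrak{E}$ commutes with flat pull-back by ordinary base-change; and the bivariant character of $s^\ast$ supplies the final piece. For (iii), the key input is the inclusion $C_{F^{\prime\prime}/G^{\prime\prime}}\hookrightarrow g^\ast C_{F/G}\hookrightarrow g^\ast\mathfrak{E}$ given by the lemma preceding the Construction, together with the compatibility of the zero-section Gysin map with pullback of the underlying vector bundle stack. Reading off $f^!_{\mathfrak{E}}\alpha$ as a cycle on $F^{\prime\prime}$ in the two ways---specialize to $C_{F/G}$, push into $\mathfrak{E}$, Gysin-pull-back, then base-change to $F^{\prime\prime}$; versus specialize to $C_{F^{\prime\prime}/G^{\prime\prime}}$, push into $g^\ast\mathfrak{E}$, and Gysin-pull-back---produces the same class.

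The main obstacle is the first piece in parts (i) and (ii): verifying carefully that Kresch's deformation to the normal cone remains functorial enough in the DM-type / Artin-stack setting for specialization to commute with both proper push-forward and flat pull-back. This amounts to checking the base-change behaviour of the groupoid $[C_{R/S}\rightrightarrows C_{U/V}]$ used to construct $C_{F/G}$ and the corresponding deformation spaces, which is essentially already packaged in Theorem~\ref{family}; once granted, the rest is a transcription of Fulton's Chapter~6 arguments using the bivariance of $s^\ast$.
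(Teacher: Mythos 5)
Your proposal is correct and follows essentially the same route as the paper: both reduce each part to the behaviour of the specialization map $\sigma$ under proper push-forward (via the induced morphism of deformation spaces, as in Fulton's Proposition 4.2) and flat pull-back (via base change of normal cones), with the remaining pieces $i_\ast$ and $s^\ast$ commuting for formal/bivariance reasons. The paper's proof is merely terser, dismissing (iii) as obvious where you spell out the inclusion $C_{F^{\prime\prime}/G^{\prime\prime}}\hookrightarrow g^\ast C_{F/G}\hookrightarrow g^\ast\mathfrak{E}$ explicitly.
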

\begin{proof}
(i) \textit{Step 1.} Let us first assume that $f$ is a closed embedding. Let us show that the diagram of groups commutes
\begin{equation}\label{fsimplif}
 \xymatrix {A_*(G^{\prime\prime}) \ar[r]^{\sigma^{\prime\prime}}\ar[d]_{p_*} & A_*(C_{F^{\prime\prime}/G^{\prime\prime}})\ar[d]^{Q_*}\\
             A_*(G^{\prime})\ar[r]^{\sigma^{\prime}} & A_*(C_{F^{\prime}/G^{\prime}})}
\end{equation}
where $Q$ in diagram (\ref{fsimplif}) is the composition of the closed embedding $C_{F^{\prime\prime}/G^{\prime\prime}}\to q^* C_{F^{\prime}/G^{\prime}}$ of Proposition \ref{pullbackofcones} with the projective (respectively proper) map $q^* C_{F^{\prime}/G^{\prime}}\to C_{F^{\prime}/G^{\prime}}$. This follows similarly to Prop 4.2 in \cite{f}. More precisely, let us consider the following factorizations of the morphisms $\sigma^{\prime}$ and $\sigma^{\prime\prime}$
\begin{equation*}
 \xymatrix {A_*(G^{\prime\prime}) \ar[r]^{pr^*}\ar[d]_{p_*} & A_*(G^{\prime\prime}\times\mathbb{A}^1)\ar[d]^{(p\times id)_*}\ar[r]& A_*(C_{F^{\prime\prime}/G^{\prime\prime}})\ar[d]^{Q_*}\\
             A_*(G^{\prime})\ar[r]^{pr^*} & A_*(G^{\prime}\times\mathbb{A}^1)\ar[r]& A_*(C_{F^{\prime}/G^{\prime}})}.
\end{equation*}
The diagram on the left commutes and we are left to show that the diagram on the right commutes. But the diagram on the right is induced by the commutative diagram below
\begin{equation}\label{tricky}
 \xymatrix {A_*(M^{\circ}_{F^{\prime\prime}}G^{\prime\prime}) \ar[r]\ar[d]_{P_*} & A_*(C_{F^{\prime\prime}/G^{\prime\prime}})\ar[d]^{Q_*}\\
            A_*(M^{\circ}_{F^{\prime}}G^{\prime})\ar[r] & A_*(C_{F^{\prime}/G^{\prime}})}
\end{equation}
where the horizontal maps are the ones induced by the natural inclusions of $C_{F^{\prime\prime}/G^{\prime\prime}}$ (and $C_{F^{\prime}/G^{\prime}}$) in $M^{\circ}_{G^{\prime\prime}}F^{\prime\prime}$ (and respectively $M^{\circ}_{F^{\prime}}G^{\prime}$). The commutativity of this diagram shows that diagram \ref{fsimplif} commutes.\\
\textit{Step 2.} Let $C^{\prime}=C_{F^{\prime}/G^{\prime}}\times_{g^*\mathfrak{E}}g^*E^1$ and $C^{\prime\prime}=C_{F^{\prime\prime}/G^{\prime\prime}}\times_{q^*g^*\mathfrak{E}}q^*g^*E^1$. Let $\eta:E^1\to\mathfrak{E}$ be the natural projection, $\eta^{\prime}:g^*E^1\to g^*\mathfrak{E}$ the morphism induced by $\eta$ and similarly $\eta^{\prime\prime}:q^*g^*E^1\to q^*g^*\mathfrak{E}$. We have that $\eta$ is smooth which implies that $C^{\prime}\to C_{F^{\prime}/G^{\prime}}$ and $C^{\prime\prime}\to C_{F^{\prime\prime}/G^{\prime\prime}}$ are smooth. Let $i^{\prime}: C_{F^{\prime}/G^{\prime}}\to g^*\mathfrak{E}$ and $j^{\prime}: C^{\prime}\to g^* E^1$ be the natural inclusions induced by $i:C_{F/G}\hookrightarrow\mathfrak{E}$. By the commutativity of flat pull-backs with projective push-forwards (see \cite{K:03}) we obtain the following commutative diagram
\begin{equation*}
\xymatrix{ A_*(C_{F^{\prime}/G^{\prime}})\ar[r]\ar[d]_{i^{\prime}_*}& A_*(C^{\prime})\ar[d]^{j^{\prime}_*}\\
A_*(g^*\mathfrak{E})\ar[r]^{{\eta^{\prime}}^*}&A_*(g^*E^1).}
\end{equation*}
Let $0^{\prime}:F^{\prime}\to g^*E^1$ be the zero section and let us denote the composition $F^{\prime}\stackrel{0^{\prime}}{\rightarrow} g^*E^1\stackrel{\eta^{\prime}}{\rightarrow}g^*\mathfrak{E}$ by $s^{\prime}$. Note that $0^{\prime}$ is regular and $\eta^{\prime}$ is smooth. By functoriality of Gysin maps (see \cite{K:03}) we have that ${s^{\prime}}^*={0^{\prime}}^!{\eta^{\prime}}^*$. With this we have shown that $f^!_{\mathfrak{E}}:A_*(G^{\prime})\to A_*(F^{\prime})$ is equal to the composition
\begin{equation}\label{vl1}
A_*(G^{\prime})\stackrel{\sigma^{\prime}}{\rightarrow} A_*(C_{F^{\prime}/G^{\prime}})\stackrel{{\eta^{\prime}}^*}{\rightarrow}A_*(C^{\prime})\stackrel{j^{\prime}_*}{\rightarrow}A_* (g^*E^1)\stackrel{{0^{\prime}}^!}{\rightarrow} A_*(F^{\prime}).
\end{equation}
In the same way we obtain that $f^!_{\mathfrak{E}}:A_*(G^{\prime\prime})\to A_*(F^{\prime\prime})$ is equal to the composition
\begin{equation}\label{vl2}
A_*(G^{\prime\prime})\stackrel{\sigma^{\prime\prime}}{\rightarrow} A_*(C_{F^{\prime\prime}/G^{\prime\prime}})\stackrel{{\eta^{\prime\prime}}^*}{\rightarrow}A_*(C^{\prime\prime})\stackrel{j^{\prime\prime}_*}{\rightarrow} A_*(q^*g^*E^1)\stackrel{{0^{\prime\prime}}^!}{\rightarrow} A_*(F^{\prime\prime}).
\end{equation}
Let $Q:C_{F^{\prime\prime}/G^{\prime\prime}}\to C_{F^{\prime}/G^{\prime}}$ defined analogously to $Q$ in Step 1. It can be easily seen from definitions that the following diagram is cartesian
\begin{equation*}
\xymatrix{C^{\prime\prime}\ar[r]\ar[d]_R&C_{F^{\prime\prime}/G^{\prime\prime}}\ar[d]^Q\\
C^{\prime}\ar[r]&C_{F^{\prime}/G^{\prime}}.}
\end{equation*}
We have thus obtained a diagram
\begin{equation}\label{ocolit}
\xymatrix{A_*(G^{\prime\prime})\ar[r]\ar[d]_{p_*}&A_*(C_{F^{\prime\prime}/G^{\prime\prime}})\ar[r]^{{\eta^{\prime\prime}}^*}& A_*(C^{\prime\prime})\ar[d]^{R_*}\\
A_*(G^{\prime})\ar[r]&A_*(C_{F^{\prime}/G^{\prime}})\ar[r]^{{\eta^{\prime}}^*}& A_*(C^{\prime}).}
\end{equation}
In the following we show that the above diagram commutes. Let $\alpha=[V^{\prime}]$ with $V^{\prime}$ reduced, $V=p(V^{\prime})$, $W=V\times_{G^{\prime}}F^{\prime}$ and $W^{\prime}=V^{\prime}\times_VW$. Proposition \ref{pullbackofcones} and the definition of $Q$ imply that the restriction of $Q$ to $C_{W^{\prime}/V^{\prime}}$ factors as
\begin{equation}\label{virtpf0}  C_{W^{\prime}/V^{\prime}}\to C_{W/V}\to C_{F^{\prime\prime}/G^{\prime\prime}}
\end{equation}
and therefore the restriction of $R$ to $C_{W^{\prime}/V^{\prime}}\times_{q^*g^*\mathfrak{E}} q^*g^* E^1$ factors as follows
\begin{equation}\label{virtpf}C_{W^{\prime}/V^{\prime}}\times_{q^*g^*\mathfrak{E}} q^*g^* E^1\to C_{W/V}\times_{g^*\mathfrak{E}}g^*E^1 \to C^{\prime}.
\end{equation} 
By abuse of notation we will denote the first map in (\ref{virtpf}) again by $R$. Without loss of generality we may assume that $V^{\prime}$ is irreducible. Let $d$ be the degree of $p$ restricted to $V^{\prime}$. Let us prove in the following that $$R([C_{W^{\prime}/V^{\prime}}\times_{q^*g^*\mathfrak{E}} q^*g^* E^1])=d[C_{W/V}\times_{g^*\mathfrak{E}}g^*E^1].$$ Let $r_0$ denote the rank of $E^0$. By (\ref{ocolit}), (\ref{virtpf0}) and (\ref{virtpf}) we have a commutative diagram
\begin{equation}\label{ocolit2}
\xymatrix{A_k(V^{\prime})\ar[r]\ar[d]_p&A_k(C_{W^{\prime}/V^{\prime}})\ar[r]\ar[d]_{Q_*}& A_{k+r_0}(C_{W^{\prime}/V^{\prime}}\times_{q^*g^*\mathfrak{E}} q^*g^* E^1)\ar[d]^{R_*}\\
A_k(V)\ar[r]&A_k(C_{V/W})\ar[r]& A_{k+r_0}(C_{W/V}\times_{g^*\mathfrak{E}}g^*E^1 )}
\end{equation}
where by abuse of notation the restrictions of $p$, $Q$ and $R$ are denoted by $p$, $Q$ and $R$ respectively. As the degree is preserved by flat pull-back is suffices to show that $Q_*([C_{W^{\prime}/V^{\prime}}])=d [C_{W/V}]$. This can be checked locally which means that it is enough to check the statement when $W\to V$ factors as $W\to M\to V$ with the first map a closed embedding and the second a smooth morphism. Let us form the cartesian diagram
\begin{equation*}
\xymatrix{W^{\prime}\ar[r]^{s^{\prime}}\ar[d]&M^{\prime}\ar[r]^{t^{\prime}}\ar[d]_r&V^{\prime}\ar[d]\\
W\ar[r]^s&M\ar[r]^t&V}
\end{equation*}
By definition $C_{W/V}=C_{W/M}/s^*T_{M/V}$ and $C_{W^{\prime}/V^{\prime}}=C_{W^{\prime}/M^{\prime}}/{s^{\prime}}^*T_{M^{\prime}/V^{\prime}}$. As $t$ is smooth we have that ${s^{\prime}}^*T_{M^{\prime}/V^{\prime}}={s^{\prime}}^*r^*T_{M/V} $. Let $r:C_{W^{\prime}/M^{\prime}}\to C_{W/M}$ be the map induced by the map in Proposition \ref{pullbackofcones}. By Step 1 we have that  that $r([C_{W^{\prime}/M^{\prime}}])=d[C_{W/M}]$. This shows that $$R_*([C_{W^{\prime}/M^{\prime}}/{s^{\prime}}^*T_{M^{\prime}/V^{\prime}}])=d[C_{W/M}/s^*T_{M/V}].$$ By sequences (\ref{vl1}) and (\ref{vl2}) and the commutativity of diagram (\ref{ocolit}) we obtain that $f^!_{\mathfrak{E}}p_*(\alpha)=q_*f^!_{\mathfrak{E}}\alpha$.
\\(ii) By (i) it is enough to show the statement for $G^{\prime}$ irreducible and $\alpha=G^{\prime}$. Let $s_1:F^{\prime}\to g^*\mathfrak{E}$ and $s_2:F^{\prime\prime}\to q^*g^*\mathfrak{E}$ be the zero sections. Then using the definition of virtual pull-backs we have that $$q^*f^!_{\mathfrak{E}}[G^{\prime}]=q^*s_1^*C_{F^{\prime}/G^{\prime}}.$$ By the flatness of $p$ we obtain that $f^!_{\mathfrak{E}}p^*(G^{\prime})=f^!_{\mathfrak{E}}G^{\prime\prime}$ and using again the definition of virtual pull-backs we obtain $f^!_{\mathfrak{E}}G^{\prime\prime}=s_2^*C_{F^{\prime\prime}/G^{\prime\prime}}$. Using now Proposition \ref{pullbackofcones} we have that $C_{F^{\prime\prime}/G^{\prime\prime}}=q^*C_{F^{\prime}/G^{\prime}}$. We are thus left to show that $q^*s_1^*C_{F^{\prime}/G^{\prime}}=s_2^*r^{*}C_{F^{\prime}/G^{\prime}}$, where $r$ the obvious flat morphism $q^*C_{F^{\prime}/G^{\prime}}\to C_{F^{\prime}/G^{\prime}} $. Noting that $s_2^*=(s_1)_{\mathfrak{E}}^!$ the last statement is true by the corresponding statement for $s_1$.
\\(iii) Is obvious.
\end{proof}
\begin{remark} If $p$ is projective we do not need $\mathfrak{E}$ to be a global quotient. The complication in Step 2 of the proof of Theorem \ref{relations} (i) is due to the fact that push-forwards along proper morphisms  of \emph{Artin stacks} cannot be defined unless the morphism is projective. Concretely, if $q$ is proper but not projective, we do not know how to define $Q_*$ in diagram (\ref{tricky}).
\\On the contrary, if $f$ is a local embedding of DM stacks and $q$ is proper then, the deformation spaces are DM stacks and $P$ is proper; this implies that we can push-forward cycles along $P$ and $Q$.
\end{remark}


\begin{theorem}(Commutativity)
Consider a fiber diagram of Artin stacks
\begin{equation*}
\xymatrix{F^{\prime\prime}\ar[d]_q\ar[r]^v&G^{\prime\prime}\ar[d]\ar[r]^u&H\ar[d]^g\\
F^{\prime}\ar[d]_p\ar[r]&G{^\prime}\ar[d]\ar[r]&K\\
F\ar[r]^f&G}
\end{equation*}such that $F^{\prime}$ and $G^{\prime\prime}$ admit stratifications by global quotients. Let us assume $f$ and $g$ are morphisms of DM-type and let ${\mathfrak{E}}$ and ${\mathfrak{F}}$ be vector bundle stacks of rank $d$ and $e$ respectively such that $(f,{\mathfrak{E}})$ and $(g,{\mathfrak{F}})$ satisfy condition $(\star)$. Then for all $\alpha\in A_k(G^{\prime})$,
\begin{equation*}
 g^!_{\mathfrak{F}}f^!_{\mathfrak{E}}(\alpha)= f^!_{\mathfrak{E}}g^!_{\mathfrak{F}}(\alpha)
\end{equation*}
in $A_{k-d-e}(F^{\prime\prime})$.
\end{theorem}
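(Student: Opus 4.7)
The plan is to follow Fulton's proof of commutativity of refined Gysin maps (\cite{f}, Theorem 6.4) adapted to the setting of vector bundle stacks and cone stacks. Each virtual pull-back factors as three operations: specialization to the normal cone ($\sigma$), push-forward along the closed immersion of cones into the ambient vector bundle stack ($i_*$), and the zero-section Gysin map from that vector bundle stack ($s^*$). Writing out both compositions $g^!_{\mathfrak{F}} f^!_{\mathfrak{E}}$ and $f^!_{\mathfrak{E}} g^!_{\mathfrak{F}}$ in this form, and using Theorem \ref{relations}(iii) together with parts (i) and (ii) to propagate $\mathfrak{E}$ and $\mathfrak{F}$ as pullbacks to the common base $F''$, the problem reduces to comparing two iterated specializations followed by two zero-section pull-backs on the same base.

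The heart of the argument is a double deformation. Iterating Theorem \ref{family}, first form the deformation space $M^{\circ}_{F'}G' \to \pp^1$ with generic fibre $G'$ and special fibre $C_{F'/G'}$. The DM-type morphism obtained by pulling $g$ back along $M^{\circ}_{F'}G' \to G' \to K$ admits its own deformation to the normal cone, yielding a flat family over $\pp^1 \times \pp^1$. Specializing along either axis first and the other axis second produces, by flatness and the compatibility part of Theorem \ref{family}, the same class in an ``iterated'' normal cone that embeds in $p^*\mathfrak{E} \times_{F''} q^*\mathfrak{F}$. This exchanges the two $\sigma$'s up to pushforward under a common closed immersion.

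It then remains to commute the two zero-section pull-backs on the iterated bundle stack $p^*\mathfrak{E} \times_{F''} q^*\mathfrak{F} \to F''$. Since each $s^*$ is a smooth (flat) pull-back in the sense of \cite{K:03}, Proposition 5.3.2, and flat pull-backs commute, we have $s^*_{\mathfrak{E}} s^*_{\mathfrak{F}} = s^*_{\mathfrak{F}} s^*_{\mathfrak{E}}$ on $A_*(p^*\mathfrak{E} \times_{F''} q^*\mathfrak{F})$. Combined with the previous step this yields the asserted identity in $A_{k-d-e}(F'')$, which matches $A_{k-d-e}(F'')$ on the nose after using Theorem \ref{relations}(iii) to identify $F'' = F' \times_{G'} G''$ with the ``other'' base of the iterated construction.

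The main obstacle is building and controlling the double deformation space in the DM-type, Artin stack setting; one must iterate Kresch's construction from Theorem \ref{family}, check that the resulting family over $\pp^1 \times \pp^1$ is still flat, and verify that the two iterated specializations yield canonically the same iterated cone stack. Once this compatibility is in place the rest is a routine diagram chase using the functoriality properties already established in Theorem \ref{relations}.
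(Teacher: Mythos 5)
Your overall architecture is the right one, and it matches the paper's: reduce to $\alpha=[G']$, express both compositions as specialization to an iterated normal cone sitting inside the common ambient vector bundle stack (the direct sum of the pullbacks of $\mathfrak{E}$ and $\mathfrak{F}$ to $F''$), and compare the two resulting cycles there; the final exchange of the two zero-section maps is indeed harmless, since each $s^*$ is the inverse of a homotopy isomorphism. The gap is at the step you yourself flag as ``the main obstacle'' and then dismiss as routine. The two compositions produce the classes of the two iterated cones
$$C_{C_{F'/G'}\times_{G'}G''\,/\,C_{F'/G'}}\qquad\text{and}\qquad C_{C_{G''/G'}\times_{G'}F'\,/\,C_{G''/G'}},$$
and these are \emph{not} canonically isomorphic cone stacks; they are only rationally equivalent inside $C_{F'/G'}\times_{G'}C_{G''/G'}$. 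Your proposed justification --- build a family over $\pp^1\times\pp^1$ and invoke flatness to conclude that specializing along the two axes in either order gives the same fibre at the origin --- does not work: the double deformation space is flat over each coordinate line separately, but the limit at $(0,0)$ genuinely depends on the direction of approach. This is exactly why commutativity is already the hard point for regular embeddings of schemes: Fulton's proof of Theorem 6.4 in \cite{f} has to construct an explicit rational equivalence on a blow-up rather than argue by flatness.

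What is true, and what the paper invokes, is Vistoli's rational equivalence (\cite{v}; see \cite{K:02}, \cite{K:03}, \cite{k} for the corrected statement and its extension to stacks): there is an explicit cycle with rational functions on (a blow-up of) $C_{F'/G'}\times_{G'}C_{G''/G'}$ whose boundary is the difference of the two iterated cone classes, and pushing this equivalence forward along the closed immersion into the ambient bundle stack finishes the proof. So your argument becomes correct once you replace the flatness claim at the origin of $\pp^1\times\pp^1$ by a citation of (or a proof of) Vistoli's rational equivalence; as written, the key identity is asserted rather than proved. Note also that the double deformation space over $\pp^1\times\pp^1$ is the right tool for the \emph{functoriality} statement, Theorem \ref{functoriality}, where one compares the fibres over $(0,0)$ and $(0,\infty)$ inside the single special fibre over $\{0\}\times\pp^1$; that comparison really is a rational equivalence on a family over the second $\pp^1$. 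Commutativity instead requires comparing two different degenerations to the same point, which is a different and strictly harder statement.
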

\begin{proof}
 Using Theorem \ref{relations} we may assume $\alpha=[G^{\prime}]$. We see that the pull-back of $g^!f^![G^{\prime}]$ to $q^*p^*\mathfrak{E}\oplus v^*u^*\mathfrak{F}$ is equal to $C_{C_{F^{\prime}/G^{\prime}}\times_{G^{\prime}}G^{\prime\prime}/C_{F^{\prime}/G^{\prime}}}$ and the pull-back of $f^!g^![G^{\prime}]$ to $q^*p^*\mathfrak{E}\oplus v^*u^*\mathfrak{F}$ is equal to $C_{C_{G^{\prime\prime}/G^{\prime}}\times_{G^{\prime}}F^{\prime}/C_{G^{\prime\prime}/G^{\prime}}}$. By Vistoli's rational equivalence (\cite{v} Lemma 3.16, or equivalently \cite{k}) $$[C_{C_{F^{\prime}/G^{\prime}}\times_{G^{\prime}}G^{\prime\prime}/C_{F^{\prime}/G^{\prime}}}]=[C_{C_{G^{\prime\prime}/G^{\prime}}\times_{G^{\prime}}F^{\prime}/C_{G^{\prime\prime}/G^{\prime}}}]$$ in $A_*(C_{F^{\prime}/G^{\prime}}\times_{G^{\prime}}C_{G^{\prime\prime}/G^{\prime}})$. This equivalence pushes forward to $A_*(p^*q^*\mathfrak{E}\oplus v^*u^*\mathfrak{F})$ and therefore the equality in the theorem follows.
\end{proof}
\begin{theorem}\label{bivar}
Let $F$ be an Artin stacks which admits a stratification by global quotients, let $f:F\to G$ be a morphism and $\mathfrak{E}\to F$ be a rank-$n$ vector bundle stack on $F$ such that $(f,\mathfrak{E})$ satisfies  Condition ($\star$). Then $f_{\mathfrak{E}}^!$ defines a bivariant class in $A^{n}(F\to G)$ in the sense of \cite{f}, Definition 17.1.
\end{theorem}

\begin{definition}\label{compatib}
 Let $F\stackrel{f}\rightarrow G\stackrel{g}\rightarrow \mathfrak{M}$ be DM-type morphisms of stacks. If we are given a distinguished triangle of relative obstruction theories which are perfect in $[-1,0]$ $$g^*E^{\bullet}_{G/\mathfrak{M}}\stackrel{\varphi}\rightarrow E^{\bullet}_{F/\mathfrak{M}}\to E^{\bullet}_{F/G}\to g^*E^{\bullet}_{G/\mathfrak{M}}[1]$$ with a morphism to the distinguished triangle $$g^*L_{G/\mathfrak{M}}\to L_{F/\mathfrak{M}}\to L_{F/G}\to g^*L_{G/\mathfrak{M}}[1],$$ then we call $(E^{\bullet}_{F/G},E^{\bullet}_{G/\mathfrak{M}},E^{\bullet}_{F/\mathfrak{M}})$ a compatible triple.
\end{definition}
\begin{remark}
 As in Construction \ref{constr}, if there is a morphism $E^{\bullet}_{F/G}\stackrel{\psi}\rightarrow g^*E^{\bullet}_{G/\mathfrak{M}}[1]$ compatible with the corresponding morphism between the cotangent complexes, then $\psi$ determines a complex $E^{\bullet}_{F/\mathfrak{M}}$ which fits in a distinguished triangle as above. Moreover, $E^{\bullet}_{F/\mathfrak{M}}$ defines a relative obstruction theory. If $E^{\bullet}_{F/G}$ and $E^{\bullet}_{G/\mathfrak{M}}$ are perfect, then $E^{\bullet}_{F/\mathfrak{M}}$ is perfect.
\end{remark}
\begin{lemma}
Consider a fibre diagram
\begin{equation*}
 \xymatrix {F^{\prime} \ar[r]^{f^{\prime}}\ar[d]_{p} & G^{\prime}\ar[d]^q\ar[r]^{0}&\mathfrak{F}^{\prime}\ar[d]^r\\
             F\ar[r]^f & G\ar[r]^0&\mathfrak{F}.}
\end{equation*}
with $F^{\prime}$ an Artin stack which admits a stratification by global quotients, $\pi:\mathfrak{F}\to G$ a vector bundle stack of rank $e$ on $G$ and $\pi^{\prime}:\mathfrak{F}^{\prime}\to G^{\prime}$ its pullback to $G^{\prime}$. Let us assume $\mathfrak{E}^{\prime}$ is a vector bundle stack on $F$ such that $(f,\mathfrak{E}^{\prime})$ satisfies condition $(\star)$. Then we have a natural map $$C_{F/\mathfrak{F}}\to\mathfrak{E}:=\mathfrak{E}^{\prime}\oplus f^*\mathfrak{F}$$ which is a closed embedding and for any $\alpha\in A_k(\mathfrak{F}^{\prime})$ $$(0\circ f)_{\mathfrak{E}}^!(\alpha)=f^!_{\mathfrak{E}^{\prime}}(0^!_{\mathfrak{F}}(\alpha)).$$ 
\end{lemma}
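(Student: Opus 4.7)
The closed immersion $C_{F/\mathfrak{F}}\hookrightarrow\mathfrak{E}$ is an immediate consequence of Example \ref{vb}(iii) applied to the composition $F\to G\xrightarrow{0}\mathfrak{F}$: since $\mathfrak{F}$ is a vector bundle stack on $G$, one obtains a canonical identification $C_{F/\mathfrak{F}}\simeq C_{F/G}\times_F f^*\mathfrak{F}$. Taking the direct product of the given embedding $C_{F/G}\hookrightarrow\mathfrak{E}'$ with the identity on $f^*\mathfrak{F}$ produces the required closed immersion $C_{F/\mathfrak{F}}\hookrightarrow\mathfrak{E}'\oplus f^*\mathfrak{F}=\mathfrak{E}$.

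For the equality of pull-backs my plan is to unwind both sides at the level of cycles and match them inside $A_*(p^*\mathfrak{E})$. By linearity it suffices to verify the identity for $\alpha=[Z]$ with $Z\subset\mathfrak{F}'$ an integral closed substack. The left-hand side specialises $[Z]$ to $[C_{Z\cap F'/Z}]\in A_*(C_{F'/\mathfrak{F}'})$, pushes this class along $C_{F'/\mathfrak{F}'}\hookrightarrow C_{F/\mathfrak{F}}\times_F F'\hookrightarrow p^*\mathfrak{E}$, and finishes with the zero-section Gysin $s^*_{\mathfrak{E}}$. The right-hand side first specialises $[Z]$ to a cycle in $A_*(C_{G'/\mathfrak{F}'})\simeq A_*(\mathfrak{F}')$ and applies the ordinary zero-section Gysin $s^*_{\mathfrak{F}'}$ to land in $A_*(G')$; it then applies $f^!_{\mathfrak{E}'}$, which specialises the result to $A_*(C_{F'/G'})$, pushes it into $p^*\mathfrak{E}'$, and applies $s^*_{\mathfrak{E}'}$.

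To identify the two outputs I would combine two ingredients. First, since $\mathfrak{E}$ splits as $\mathfrak{E}'\oplus f^*\mathfrak{F}$, its zero-section Gysin factors as $s^*_{\mathfrak{E}}=s^*_{\mathfrak{E}'}\circ s^*_{f^*\mathfrak{F}}$, which is the vector-bundle-stack analogue of \cite{f}, Theorem 3.3(c). Second, Example \ref{vb}(iii) applied to $F'\to G'\to\mathfrak{F}'$ gives $C_{F'/\mathfrak{F}'}\simeq C_{F'/G'}\times_{F'}(f')^*\mathfrak{F}'$, and under this decomposition the single specialisation $[C_{Z\cap F'/Z}]$ appearing on the left should coincide with the iterated specialisation used on the right. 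The main obstacle is this last identification of specialisation cycles, which I would handle by a Vistoli-type double-deformation argument: construct a flat family over $\pp^1\times\pp^1$ whose two axis-directions realise deformation to the normal cones of $F'\hookrightarrow G'$ and of $G'\hookrightarrow\mathfrak{F}'$ respectively, and verify that iterated specialisation along the axes agrees rationally with direct specialisation along the diagonal. Combining the two compatibilities then yields the desired equality.
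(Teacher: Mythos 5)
Your treatment of the first claim coincides with the paper's: both invoke Example \ref{vb}(iii) to identify $C_{F/\mathfrak{F}}$ with $C_{F/G}\times_F f^*\mathfrak{F}$ and then compose with the product of the given immersion $C_{F/G}\hookrightarrow\mathfrak{E}'$ and the identity on $f^*\mathfrak{F}$. For the second claim, however, your plan leaves its central step unproved. You keep $\alpha=[Z]$ for an arbitrary integral closed substack $Z\subset\mathfrak{F}'$ and defer the comparison of the direct specialisation $[C_{Z\cap F'/Z}]$ with the iterated one to an unexecuted ``Vistoli-type double-deformation argument.'' That comparison is precisely the hard content of the general commutativity and functoriality statements, and as you have set it up it is not even well posed at the level of cycles: after applying $s^*_{\mathfrak{F}'}$ you hold only a rational equivalence class on $G'$, so the subsequent specialisation to $C_{F'/G'}$ has no canonical cycle representative to match against the left-hand side. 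Carrying out a double deformation over $\pp^1\times\pp^1$ for arbitrary $Z$ would amount to reproving Theorem \ref{functoriality} in a special case, whereas this lemma is meant to be the easy ingredient that feeds into that theorem's proof.

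The missing idea is the reduction that trivialises the lemma: by the homotopy property of Chow groups of vector bundle stacks (\cite{K:02}), every $\alpha\in A_k(\mathfrak{F}')$ is the pull-back of some $\beta\in A_*(G')$ along the projection, and combining this with compatibility with proper push-forward (Theorem \ref{relations}\,(i)) one may assume $G'$ is integral and $\alpha=[\mathfrak{F}']$. Then $0^!_{\mathfrak{F}}[\mathfrak{F}']=[G']$, and the asserted identity collapses to $(0\circ f)^![\mathfrak{F}']=f^![G']$, which follows directly from the identification $[C_{F'/\mathfrak{F}'}]=[\pi_1^*C_{F'/G'}]$ supplied by your first part, together with the fact that pull-back along the projections of the vector bundle stacks is an isomorphism on Chow groups; no deformation space is needed. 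Your observation that $s^*_{\mathfrak{E}}=s^*_{\mathfrak{E}'}\circ s^*_{f^*\mathfrak{F}}$ is correct and is implicitly what closes this computation, but without the homotopy-invariance reduction your argument does not go through as written.
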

\begin{proof}
 For the first part it suffices to show that $C_{F/\mathfrak
{F}}$ is canonically isomorphic to $C_{F/G}\times_{F}(C_{G/\mathfrak{F}}\times_{G}F)$, that is example \ref{vb}.
\\The equality follows in the same way as in (\cite{f}). Let us notice that by theorem \ref{relations} (i) and the fact that $(\pi^{\prime})^*:A_*({F}^{\prime})\to A_{*+e}(\mathfrak{F}^{\prime})$ is an isomorphism (Theorem 2.1.12, part (x) in \cite{K:02}) we may assume $\alpha$ to be represented by $\mathfrak{F}^{\prime}$ and $G^{\prime}$ can be taken to be irreducible. Now, the problem reduces to 
\begin{equation}\label{fult}
 (0\circ f)^![\mathfrak{F}^{\prime}]=f^![G^{\prime}].
\end{equation}
If $\pi_1:p^*\mathfrak{E}\to p^*\mathfrak{E}^{\prime}$ and $\pi_2:p^*\mathfrak{E}^{\prime}\to F^{\prime}$ are the natural projections, then we have by the above $$[C_{F^{\prime}/\mathfrak{F}^{\prime}}]=[\pi_1^*C_{F^{\prime}/G^{\prime}}]\in A_*(p^*\mathfrak{E}).$$ From the construction of Gysin pull-backs $$[C_{F^{\prime}/G^{\prime}}]=\pi_2^*f^![G^{\prime}]\in A_*(p^*\mathfrak{E}^{\prime})$$ and $$[C_{F^{\prime}/\mathfrak
{F}^{\prime}}]=(\pi_2\circ\pi_1)^*(0\circ f)^![\mathfrak{F}^{\prime}]\in A_*(p^*\mathfrak{E}).$$ Combining the three equalities we get equality (\ref{fult}) above, and therefore the conclusion.
\end{proof}

\begin{theorem}\label{functoriality}
(Functoriality)
 Consider a fibre diagram
\begin{equation*}
 \xymatrix {F^{\prime} \ar[r]^{f^{\prime}}\ar[d]_{p} & G^{\prime}\ar[d]^q\ar[r]^{g^{\prime}}&\mathfrak{M}^{\prime}\ar[d]^r\\
             F\ar[r]^f & G\ar[r]^g&\mathfrak{M}.}
\end{equation*}
Let us assume $f$, $g$ and $g\circ f$ are DM-type morphisms and have perfect relative obstruction theories $E^{\prime\bullet}$, $E^{\prime\prime\bullet}$ and $E^{\bullet}$ respectively and let us denote the associated vector bundle stacks by $\mathfrak{E}^{\prime}$, $\mathfrak{E}^{\prime\prime}$ and $\mathfrak{E}$ respectively. If $F^{\prime},\ F^{\prime}$ admit stratifications by global quotients and $(E^{\prime\bullet}, E^{\prime\prime\bullet},E^{\bullet})$ is a compatible triple, then for any $\alpha\in A_k(\mathfrak{M}^{\prime})$ $$(g\circ f)_{\mathfrak{E}}^!(\alpha)=f^!_{\mathfrak{E}^{\prime}}(g^!_{\mathfrak{E}^{\prime\prime}}(\alpha)).$$
\end{theorem}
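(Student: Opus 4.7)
The plan is to reduce by bivariance to $\alpha=[\mathfrak{M}']$, use the compatible triple to express $\mathfrak{E}$ as an extension of $\mathfrak{E}'$ by $f^*\mathfrak{E}''$, and then establish a cycle-level identity in that extension that equates the iterated and total Gysin pull-backs, in the spirit of the proof of functoriality of Gysin maps in Chapter~6 of \cite{f}. First, since $f^!_{\mathfrak{E}'}$, $g^!_{\mathfrak{E}''}$, and $(g\circ f)^!_{\mathfrak{E}}$ are bivariant (Corollary~\ref{bivar}) and Theorem~\ref{relations}(i)--(ii) ensures they commute with proper push-forward and flat pull-back, I may propagate an arbitrary $\alpha\in A_k(\mathfrak{M}')$ through an atlas and reduce to $\alpha=[\mathfrak{M}']$. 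Next, dualizing the distinguished triangle in Definition~\ref{compatib} and passing to $h^1/h^0$ yields a short exact sequence of vector bundle stacks on $F$,
\[
0 \to f^*\mathfrak{E}'' \to \mathfrak{E} \to \mathfrak{E}' \to 0,
\]
which pulls back along $p$ to the analogous sequence on $F'$. This gives the rank additivity $\mathrm{rk}\,\mathfrak{E} = \mathrm{rk}\,\mathfrak{E}' + \mathrm{rk}\,\mathfrak{E}''$, so both sides of the desired identity live in the same Chow group; moreover the Gysin map of $\mathfrak{E}$ factors through the extension as in Example~17.6.4 of \cite{f}, a factorization already used (in the split case) in the previous lemma.

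The crux is a cycle-level identity
\[
[C_{F'/\mathfrak{M}'}] \;=\; [C_{F'/G'}\times_{F'}(C_{G'/\mathfrak{M}'}\times_{G'}F')]
\]
in $A_*(p^*\mathfrak{E})$, where the left-hand class sits in $p^*\mathfrak{E}$ via the embedding $C_{F'/\mathfrak{M}'}\hookrightarrow p^*\mathfrak{E}$ supplied by $(\star)$, and the right-hand class sits in $p^*\mathfrak{E}$ through the extension above, using the embeddings of $C_{F'/G'}$ into $p^*\mathfrak{E}'$ and of $C_{G'/\mathfrak{M}'}$ into $q^*\mathfrak{E}''$. I would prove this by iterating the deformation-to-the-normal-cone of Theorem~\ref{family}: first deform $g:G\to\mathfrak{M}$ to obtain $M^{\circ}_{G}\mathfrak{M}\to\pp^1$, then deform $f$ over the total space of that family to obtain a double deformation flat over $\pp^1\times\pp^1$, whose bi-special fibre is the product cone $C_{F/G}\times_F f^*C_{G/\mathfrak{M}}$ embedded in $\mathfrak{E}$. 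Base-changing to $F'$ and taking limits in the two orders, Vistoli--Kresch rational equivalence (exactly as in the proof of the commutativity theorem above) produces the desired identity.

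Applying $s_{\mathfrak{E}}^{*}$ to the cycle identity and using the factorization through the extension, the two sides respectively compute $(g\circ f)^!_{\mathfrak{E}}[\mathfrak{M}']$ and $f^!_{\mathfrak{E}'}g^!_{\mathfrak{E}''}[\mathfrak{M}']$, which completes the proof. The main obstacle I foresee is the double-deformation construction: one must carry out iterated deformation to the normal cone in the DM-type/Artin-stack setting, verify flatness over $\pp^1\times\pp^1$, identify the bi-special fibre with the product cone, and, most delicately, check that the closed immersions of successive normal cones into the vector bundle stacks dictated by the compatible triple are preserved by the deformation (not merely compatible up to abstract isomorphism). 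For this last point the morphism $\varphi$ in Definition~\ref{compatib} is crucial, and Kresch's constructions in \cite{k, K:01, K:03} should supply the needed technical flexibility.
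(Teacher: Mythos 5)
Your overall strategy --- reduce to $\alpha=[\mathfrak{M}^{\prime}]$, run a double deformation to the normal cone, and convert a Vistoli--Kresch rational equivalence into an equality of Gysin pull-backs --- is the same as the paper's, but your ``crux'' identity is not the one the double deformation actually produces, and the step placing both cycles in $p^*\mathfrak{E}$ has a genuine gap. The bi-special fibre of $M^{\circ}_{F^{\prime}\times\pp^1/M^{\circ}_{G^{\prime}/\mathfrak{M}^{\prime}}}$ is $C_{F^{\prime}/C_0}$, the normal cone of $F^{\prime}$ inside the cone $C_0:=C_{G^{\prime}/\mathfrak{M}^{\prime}}$, and \emph{not} the product $C_{F^{\prime}/G^{\prime}}\times_{F^{\prime}}f^{\prime*}C_0$: since $C_0$ is merely a cone rather than a vector bundle stack, Example \ref{vb}(iii) does not apply to identify the two. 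What the paper actually does is use the preceding lemma (the identity $f^!\circ 0^!_{\mathfrak{F}}=(0\circ f)^!$ for a vector bundle stack $\mathfrak{F}$) to show that $f^!_{\mathfrak{E}^{\prime}}g^!_{\mathfrak{E}^{\prime\prime}}[\mathfrak{M}^{\prime}]$ is represented by $[C_{F^{\prime}/C_0}]$ inside the \emph{direct sum} $p^*(\mathfrak{E}^{\prime}\oplus f^*\mathfrak{E}^{\prime\prime})$, while the double deformation supplies the rational equivalence $[C_{F^{\prime}/C_0}]\sim[C_{F^{\prime}/\mathfrak{M}^{\prime}}]$ only in $A_*(C_{F^{\prime}\times\pp^1/M^{\circ}_{G^{\prime}/\mathfrak{M}^{\prime}}})$.

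The more serious gap is that your two cycles do not live in a common ambient vector bundle stack. The compatible triple gives an extension $0\to\mathfrak{E}^{\prime}\to\mathfrak{E}\to f^*\mathfrak{E}^{\prime\prime}\to0$ (note: sub and quotient are opposite to the order you wrote, cf.\ (\ref{downtr})), and this extension need not split; so the embeddings $C_{F^{\prime}/G^{\prime}}\hookrightarrow p^*\mathfrak{E}^{\prime}$ and $C_{G^{\prime}/\mathfrak{M}^{\prime}}\hookrightarrow q^*\mathfrak{E}^{\prime\prime}$ land your second cycle in $p^*(\mathfrak{E}^{\prime}\oplus f^*\mathfrak{E}^{\prime\prime})$, not in $p^*\mathfrak{E}$, and Example 17.6.4 of \cite{f} does not bridge the two. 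This is precisely where the morphism $\varphi$ of Definition \ref{compatib} must be used \emph{constructively}: the paper forms $v=(T\cdot \mathrm{id},\,U\cdot\varphi):f^*E^{\prime\prime\bullet}\otimes\mathcal{O}_{\pp^1}(-1)\to f^*E^{\prime\prime\bullet}\oplus E^{\bullet}$ and pushes the rational equivalence forward into the vector bundle stack $w^*h^1/h^0(c(v)^{\vee})$ over $F^{\prime}\times\pp^1$, which restricts to $p^*(\mathfrak{E}^{\prime}\oplus f^*\mathfrak{E}^{\prime\prime})$ over $0$ and to $p^*\mathfrak{E}$ over $1$; homotopy invariance of the Gysin map for this family then yields (\ref{toshow}). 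You correctly flag this compatibility of embeddings as the ``main obstacle,'' but you defer it rather than resolve it, and it is the heart of the proof: without the interpolating bundle stack the rational equivalence produced by the double deformation cannot be converted into the asserted equality of pull-backs.
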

\begin{proof}
We argue as in the proof of Theorem 1 in \cite{K:01} (or Theorem 6.5 of \cite{f}).
\\In the same way as in the proof of the previous lemma $\mathfrak{M}^{\prime}$ may be assumed irreducible and reduced and $\alpha=[\mathfrak{M}^{\prime}]$.
\\
\\Consider the vector bundle stacks: $\rho: p^*\mathfrak{E}\to F^{\prime}$, $\pi: q^*\mathfrak{E}^{\prime\prime}\to G^{\prime}$ and $\sigma: \mathfrak{E}^{\prime}\oplus p^*f^*\mathfrak{E}^{\prime\prime}\to F^{\prime}$.
\\By definition 
\begin{align*}
 (g\circ f)^!\mathfrak{M}^{\prime}=&(\rho^*)^{-1}([C_{F^{\prime}/\mathfrak{M}^{\prime}}])\\
g^!\mathfrak{M}^{\prime}=&(\pi^*)^{-1}([C_{G^{\prime}/\mathfrak{M}^{\prime}}]).
\end{align*}
Let us now look at the cartesian diagram
\begin{equation*}
 \xymatrix {F^{\prime} \ar[r]^{f^{\prime}}\ar[d] & G^{\prime}\ar[d]\ar[r]&C_{G^{\prime}/\mathfrak{M}^{\prime}}\ar[d]\\
             F\ar[r]^{f} & G\ar[r]^-0&\mathfrak{E}_{G/\mathfrak{M}}.}
\end{equation*}
From the definition of the pull-back we know that $f^!(g^!\mathfrak{M}^{\prime})$ is equal to $f^!(0^![C_{G^{\prime}/\mathfrak{M}^{\prime}}])$ and by the previous lemma $$f^!(0^![C_{G^{\prime}/\mathfrak{M}^{\prime}}])=(0\circ f)^![C_{G^{\prime}/\mathfrak{M}^{\prime}}].$$ If we denote $C_{G^{\prime}/\mathfrak{M}^{\prime}}$ by $C_0$, then the above shows that $ f^!(g^!\mathfrak{M}^{\prime})$ is represented in $\mathfrak{E}^{\prime}\oplus f^*\mathfrak{E}^{\prime\prime}$ by the cycle $[C_{F^{\prime}/C_{0}}]$. The construction respects equivalence in Chow groups and so we are reduced to showing
\begin{equation}\label{toshow}
 (\sigma^*)^{-1}([C_{F^{\prime}/C_0}])=(\rho^*)^{-1}([C_{F^{\prime}/\mathfrak{M}^{\prime}}])
\end{equation}
in $A_*F^{\prime}$.
\\
\\Introduce the double deformation space $M^{\prime}:=M^{\circ}_{F^{\prime}\times\pp^1/M^{\circ}_{G^{\prime}/\mathfrak{M}^{\prime}}}\to\pp^1\times\pp^1$ with general fiber $M^{\circ}_{G^{\prime}/\mathfrak{M}^{\prime}}$ and special fibre $C_{F^{\prime}\times\pp^1/M^{\circ}_{G^{\prime}/\mathfrak{M}^{\prime}}}$ over $\{0\}\times \pp^1$ (see \cite{K:01}, proof of Theorem 1). Restricting to this special fibre and considering the rational equivalence on the second $\pp^1$ we see that
\begin{equation}\label{equiv}
[C_{F^{\prime}/C_{0}}]\sim[C_{F^{\prime}/\mathfrak{M}^{\prime}}]
\end{equation}
 in $A_*(C_{F^{\prime}\times\pp^1/M^{\circ}_{G^{\prime}/\mathfrak{M}^{\prime}}})$.
\\In a completely analogous fashion there exists a double deformation space $M:=M^{\circ}_{F\times\pp^1/M^{\circ}_{G/\mathfrak{M}}}$. If we consider the map $w:F^{\prime}\times\pp^1\stackrel{p\times 1_{\pp^1}}\rightarrow F\times\pp^1$, then the general fibers of $M$ and $M^{\prime}$ are related by the cartesian diagram
\begin{equation*}
 \xymatrix{F^{\prime}\times\pp^1 \ar[r]\ar[d]_{w}&M^{\circ}_{G^{\prime}/\mathfrak{M}^{\prime}}\ar[d]\\
F\times\pp^1 \ar[r]&M^{\circ}_{G/\mathfrak{M}}.}
\end{equation*}
This implies  $C_{F^{\prime}\times\pp^1/M^{\circ}_{G^{\prime}/\mathfrak{M}^{\prime}}}\stackrel{i}\rightarrow (p\times 1_{\pp^1})^*C_{F\times\pp^1/M^{\circ}_{G/\mathfrak{M}}}$ is a closed immersion and consequently we can push forward relation (\ref{equiv}) in $A_*(w^*C_{F\times\pp^1/M^{\circ}_{G/\mathfrak{M}}})$.
Now, by Proposition 1, in \cite{K:01}, we have a morphism $$A_*(C_{F\times\pp^1/M^{\circ}_{G/\mathfrak{M}}})\stackrel{i_*}{\hookrightarrow} A_*(h^1/h^0(c(u)^{\vee}))$$ where $u:=(T\cdot id, U\cdot can)$ is the map
\begin{equation*}
 f^*L_{G/\mathfrak{M}}\otimes\mathcal{O}_{\pp^1}(-1)\stackrel{u}{\rightarrow}f^*L_{G/\mathfrak{M}}\oplus L_{F/\mathfrak{M}}
\end{equation*}
in $\mathcal{D}(F\times\pp^1)$ and $c(u)$ its mapping cone. Here we denoted by $T$ and $U$ the homogeneous coordinates on $\pp^1$. Let us consider the closed immersion $w^*i:A_*(w^*C_{F\times\pp^1/M^{\circ}_{G/\mathfrak{M}}})\stackrel{}{\hookrightarrow} A_*(w^*h^1/h^0(c(u)^{\vee}))$. Then pushing forward via $w^*i$ the equivalence relation we have in $A_*(w^*C_{F\times\pp^1/M^{\circ}_{G/\mathfrak{M}}})$, we obtain the equivalence relation (\ref{equiv}) in $A_*(w^*h^1/h^0(c(u)^{\vee}))$.
\\Let us now use the notation of Construction \ref{constr}. Consider the morphism $v:=(T\cdot id, U\cdot\varphi):f^*E_{G/\mathfrak{M}}\otimes\mathcal{O}_{\pp^1}(-1)\to f^*E^{\prime\prime}\oplus E^{\prime}$ in $\mathcal{D}(F\times\pp^1)$. The morphism of distinguished triangles in Definition \ref{compatib} gives a morphism of distinguished triangles 
 \begin{equation*}\small
 \xymatrix {(fp)^*E^{\prime\prime\bullet}(-1) \ar[r]^-{w^*v}\ar[d] & (fp)^*E^{\prime\prime\bullet}\oplus p^*E^{\bullet}\ar[d]\ar[r]&w^*c(v)\ar[d]\ar[r]&(fq)^*E^{\prime\prime\bullet}(-1)[1]\ar[d]\\
             (fp)^*L_{G/\mathfrak{M}}(-1)\ar[r]^-{w^*u} &(fp)^*L_{G/\mathfrak{M}}\oplus p^*L_{F/\mathfrak{M}}\ar[r]&w^*c(u)\ar[r]&(fq)^*L_{G/\mathfrak{M}}(-1)[1]}
\end{equation*}
over $F^{\prime}\times\pp^1$. Dualizing and taking $h^1/h^0$ of the map $w^*c(v)\to w^*c(u)$, we obtain a morphism of Picard stacks $w^*h^1/h^0(c(u)^{\vee})\to w^*h^1/h^0(c(v)^{\vee})$ that is a closed immersion. Therefore, we can push forward the rational equivalence (\ref{equiv}) on $w^*h^1/h^0(c(v)^{\vee})$ that is a vector bundle stack on $F^{\prime}\times\pp^1$. The fact that the above map between cone stacks is a closed immersion follows from Prop 2.6 in \cite{bf} and the fact that the maps in cohomology induced by the vertical maps in the above diagram are isomorphisms in degree $0$ and surjective in degree $-1$.
\\
\\Let us now conclude the proof. We have obtained $[C_{F^{\prime}/C_{0}}]\sim[C_{F^{\prime}/\mathfrak{M}^{\prime}}]$ in $A_*(w^*h^1/h^0(c(v)^{\vee}))$. Looking at $w^*h^1/h^0(c(v)^{\vee})\to\pp^1$, we see that $w^*h^1/h^0(c(v)^{\vee})$ restricts to $F_0:=p^*\mathfrak{E}^{\prime}\oplus p^*f^*\mathfrak{E}^{\prime\prime}$ and $F_1:=p^*\mathfrak{E}$ in $F^{\prime}\times\{0\}$ respectively $F^{\prime}\times\{1\}$. Consider the map
\begin{equation*}
A_*(C_{F^{\prime}\times\pp^1/M^{\circ}_{G^{\prime}/\mathfrak{M}^{\prime}}})\to A_*(w^*h^1/h^0(c(v)^{\vee}))\to A_*(F_i)\to F^{\prime}.
\end{equation*}
We have that the image of $[C_{F^{\prime}\times\pp^1/M^{\circ}_{G^{\prime}/\mathfrak{M}^{\prime}}}]$ in $A_*(F_0)$ is $[C_{F^{\prime}/C_0}]$ and in $A_*(F_1)$ is $[C_{F^{\prime}/\mathfrak{M}^{\prime}}]$. As the composition does not depend on $i$ we deduce equality (\ref{toshow}).
\end{proof}

\begin{corollary}\label{virtual}
Let us assume we have a commutative diagram
\begin{equation*}
 \xymatrix{F\ar[rr]^f\ar[rd]_{\epsilon_F}&&G\ar[ld]^{\epsilon_{G}}\\
&\mathfrak{M}}
\end{equation*}
with $\mathfrak{M}$ of pure dimension. If $F$ and $G$ admit stratifications by global quotients and  $(E^{\bullet}_{F/G},E^{\bullet}_{G/\mathfrak{M}},E^{\bullet}_{F/\mathfrak{M}})$ a compatible triple, then $$f_{\mathfrak{E}_{F/G}}^![G]^{\vv}=[F]^{\vv}.$$
\end{corollary}
\begin{proof}
By the definition of virtual classes we have
\begin{align*}
 [F]^{\vv}&=(\epsilon_F)_{\mathfrak{E}_{F/\mathfrak{M}}}^![\mathfrak{M}]\\
[G]^{\vv}&=(\epsilon_G)_{\mathfrak{E}_{G/\mathfrak{M}}}^![\mathfrak{M}].
\end{align*}
Moreover, by the construction of $E_{F/G}$ we are in the hypotheses of Theorem \ref{functoriality} and therefore
\begin{equation*}
 (\epsilon_{G}\circ f)_{\mathfrak{E}_{F/\mathfrak{M}}}^![\mathfrak{M}]=f_{\mathfrak{E}_{F/G}}^!(\epsilon_{G})_{\mathfrak{E}_{G/\mathfrak{M}}}^![\mathfrak{M}].
\end{equation*}
The two equations above show that $f_{\mathfrak{E}_{F/G}}^![G]^{\vv}=[F]^{\vv}.$

\end{proof}
\begin{remark}
Let us consider a cartesian diagram of DM stacks
\begin{equation*}
 \xymatrix {F^{\prime} \ar[r]\ar[d]_{g} & G^{\prime}\ar[d]^f\\
             F\ar[r]^i & G}
\end{equation*}
with obstruction theories $E_{F}^{\bullet}$, $E_{G}^{\bullet}$, $E_{F^{\prime}}^{\bullet}$, $E_{G^{\prime}}^{\bullet}$. Let us assume we have perfect obstruction theories $E_{F/G}^{\bullet}$ and $E_{F^{\prime}/G^{\prime}}^{\bullet}$ compatible with  $E_{F}^{\bullet}$, $E_{G}^{\bullet}$ and $E_{F^{\prime}}^{\bullet}$, $E_{G^{\prime}}^{\bullet}$. If $g^*\mathfrak{E}_{F/G}=\mathfrak{E}_{F^{\prime}/G^{\prime}}$, then $i^![G^{\prime}]^{\vv}=[F^{\prime}]^{\vv}$.
\\This generalizes Proposition 5.10 in \cite{bf} and Theorem 1 in \cite{K:01}.
\end{remark}

\section{Applications}
In this section we collect some applications of the virtual pull-back we defined. We take the ground field to be $\C$. By a homology class of a curve we will mean an element of $A_1^{alg}(X)$-- the group of 1-cycles modulo algebraic equivalence (see \cite{f}, Chapter 10). We will shortly denote it by $A_1(X)$.
\subsection{Preliminaries}
Let us fix notations. Let $X$ be a smooth projective variety and $\beta\in A_1(X)$ a homology class of a curve in $X$. We denote by $\overline{M}_{g,n}(X,\beta)$ the moduli space of stable genus-$g$, $n$-pointed maps to $X$ of homology class $\beta$ (see \cite{FP:97}). Let $\epsilon_X: \overline{M}_{g,n}(X,\beta)\to \mathfrak{M}_{g,n}$ be the morphism that forgets the map (and does not stabilize the pointed curve) and $\pi_X: \overline{M}_{g,n+1}(X,\beta)\to \overline{M}_{g,n}(X,\beta)$ the morphism that forgets the last marked point and stabilizes the result. Then it is a well-known fact that  $$E_{\overline{M}_{g,n}(X,\beta)/\mathfrak{M}_{g,n}}^{\bullet}:=(\mathcal{R}^{\bullet}{\pi_{X}}_*ev_{X}^*T_{X})^{\vee}$$ defines an obstruction theory for the morphism $p$, where $ev_X$ indicates the evaluation map $ev_X:\overline{M}_{g,n+1}(X,\beta)\to X$ (see \cite{b}). Unless otherwise stated the map $\pi_X$ will be endowed with this obstruction theory (and not some other). We call $$[\overline{M}_{g,n}(X,\beta)]^{\vv}:=(\epsilon_X)_{\mathfrak{E}_{\overline{M}_{g,n}(X,\beta)/\mathfrak{M}}}^![\mathfrak{M}_{g,n}]$$ the virtual class of $\overline{M}_{g,n}(X,\beta)$. The dimension of $[\overline{M}_{g,n}(X,\beta)]^{\vv}$ is called the virtual dimension of $\overline{M}_{g,n}(X,\beta)$ and we denote it by $\vdim\overline{M}_{g,n}(X,\beta)$. To a collection of Chow (or cohomology) classes $\gamma_i\in A^{k_i}(X)$ such that $\sum_{i=1}^n k_i=\vdim \overline{M}_{g,n+1}(X,\beta)$, one can associate a Gromov-Witten (shortly GW) invariant defined to be $$I_{g,n,\beta}^X:=\prod_{i=1}^nev_i^*\gamma_i\cdot[\overline{M}_{g,n}(X,\beta)]^{\vv}.$$
\begin{remark}
Let $f: X\to Y$ be a morphism of smooth algebraic varieties. Let $\beta\in A_1(X)$ and $g,\ n$ be any natural numbers. Then $f$ induces a morphism of stacks $\bar{f}: \overline{M}_{g,n}(X,\beta)\to \overline{M}_{g,n}(Y,f_*\beta)$.
\\Convention: Given a morphism of smooth algebraic varieties $f:X\to Y$, we will indicate the induced morphism between moduli spaces of stable maps by the same letter with a bar.
\end{remark}
Let us now state the version of Cohomology and Base Change we will use in our applications. We refer to \cite{gm}, III.8.
\begin{theorem} (Cohomology and Base Change)\label{grothmaps} Let $G^{\prime}\stackrel{p}{\rightarrow}G$ be a flat morphism of separated DM stacks and $E^{\bullet}$ a finite bounded of locally free sheaf on $G^{\prime}$. Then for any base change
\begin{equation*}
 \xymatrix{F^{\prime}\ar[r]^{g}\ar[d]_q&G^{\prime}\ar[d]^p\\
F\ar[r]^f&G}
\end{equation*}
we have a canonical isomorphism in $\mathcal{D}_F$ $$Lf^*Rp_*E^{\bullet}\simeq Rq_*Lg^*E^{\bullet}.$$
\end{theorem}

\begin{proposition}\label{cbg}
 Let $f:X\rightarrow \pp$ be a morphism of smooth projective varieties and let $T_{X/\pp}$ be the dual of the cotangent complex of $X$ to $\pp$. Then, in notations as above
\\(i) The map $\bar{f}:\overline{M}_{g,n}(X,\beta)\to \overline{M}_{g,n}(\pp,f_*\beta)$ has a dual obstruction theory $E_{\overline{M}_{g,n}(X,\beta)/\overline{M}_{g,n}(\pp,f_*\beta)}^{\bullet}$ isomorphic to $\mathcal{R}^{\bullet}{\pi_X}_*ev_X^*T_{X/\pp}$ in $\mathcal{D}_{\overline{M}_{g,n}(X,\beta)}$.
\\(ii) If $f=i$ is an embedding and $N_{X/\pp}$ denotes the normal bundle of $X$ in $\pp$, then $\bar{i}:\overline{M}_{g,n}(X,\beta)\to \overline{M}_{g,n}(\pp,f_*\beta)$ has a dual obstruction theory $(E_{\overline{M}_{g,n}(X,\beta)/\overline{M}_{g,n}(\pp,i*\beta)}^{\bullet})^{\vee}$ $$[0\to\mathcal{R}^0{\pi_X}_*ev_X^* N_{X/\pp}\to \mathcal{R}^1{\pi_X}_*ev_X^* N_{X/\pp}]$$ in $[0,2]$.
\\(iii) If $g=0$ and $\pp$ is convex, then $E_{\overline{M}_{g,n}(X,\beta)/\overline{M}_{g,n}(\pp,f_*\beta)}^{\bullet}$ is perfect.
\end{proposition}
\begin{proof}
(i) In notations as in the beginning of the section, the relative obstruction theories are $E_{\overline{M}_{0,n}(\pp, f_*\beta)/\mathfrak{M}}^{\bullet}:=(\mathcal{R}^i{\pi_{\pp}}_*ev_{\pp}^*T_{\pp})^{\vee}$ and $E_{\overline{M}_{0,n}(X, \beta)/\mathfrak{M}}^{\bullet}:=(\mathcal{R}^i{\pi_X}_*ev_X^*T_{X})^{\vee}$. The distinguished triangle
\begin{equation}\label{nor}
T_{X}\to f^*T_{\pp}\to T_{X/\pp}\to T_X[1]                                         
\end{equation}
 induces a distinguished triangle $$\mathcal{R}^{\bullet}{\pi_X}_*ev_X^*T _X\stackrel{\varphi}{\rightarrow} \mathcal{R}^{\bullet}{\pi_X}_*ev_X^*f^*T_{\pp}\to \mathcal{R}^{\bullet}{\pi_X}_*ev_X^*T_{X/\pp}\to \mathcal{R}^{\bullet}{\pi_X}_*ev_X^*f^*T_X[1].$$ We now need to show that
\begin{equation*}
\mathcal{R}^{\bullet}{\pi_X}_*ev_X^*f^*T_{\pp}=\bar{f}^*\mathcal{R}^{\bullet}{\pi_{\pp}}_*ev_{\pp}^*T_{\pp}
\end{equation*}
 in the derived category of $\overline{M}_{0,n}(X,\beta)$ and this follows by Theorem \ref{grothmaps}.
\\(ii) If $f=i$ is an embedding, then $T_{X/\pp}$ is quasi-isomorphic to $[0\to N_{X/Y}]$ in degrees $[0,1]$ and the claim follows by (i).
\\(iii)  If $\pp$ is convex, then $\overline{M}_{0,n}(\pp, i_*\beta)$ is unobstructed and the claim follows from Remark \ref{smooth}.
\end{proof}
\begin{proposition}\label{cartesian}
Let
\begin{equation}\label{hopefullycart}
 \xymatrix{X^{\prime}\ar[r]^j\ar[d]^p&\pp^{\prime}\ar[d]^q
\\X\ar[r]^i&\pp}
\end{equation}
be a cartesian diagram of smooth projective varieties and let $\beta\in A_1(X^{\prime})$ be any homology class of a curve.
\\(i) Then the induced diagram of moduli spaces of stable maps
\begin{equation*}
 \xymatrix{\overline{M}_{g,n}(X^{\prime},\beta)\ar[r]^{\bar{j}}\ar[d]^{\bar{p}}&\overline{M}_{g,n}(\pp^{\prime},j_*\beta)\ar[d]^{\bar{q}}
\\\overline{M}_{g,n}(X,p_*\beta)\ar[r]^{\bar{i}}&\overline{M}_{g,n}(\pp, (i\circ p)_*\beta)}
\end{equation*}
is commutative. If $i$ is a closed embedding, then it induces an open and closed embedding of $\overline{M}_{g,n}(X^{\prime},\beta)$ in the fiber product $\overline{M}_{g,n}(X,p_*\beta)\times_{\overline{M}_{g,n}(\pp, (i\circ p)_*\beta)}\overline{M}_{g,n}(\pp^{\prime},j_*\beta)$.
\\(ii)If the natural map $p^*L_{X/\pp}\to L_{X^{\prime}/\pp^{\prime}}$ is an isomorphism, then it induces an isomorphism $$E_{\overline{M}_{g,n}(X^{\prime},\beta)/\overline{M}_{g,n}(\pp^{\prime},j_*\beta)}^{\bullet}\simeq\bar{p}^*E_{\overline{M}_{g,n}(X,p_*\beta)/\overline{M}_{g,n}(\pp,(i\circ p)_*\beta)}^{\bullet}.$$ If moreover, $g=0$ and $\pp$ is convex then, $E_{\overline{M}_{g,n}(X^{\prime},\beta)/\overline{M}_{g,n}(\pp^{\prime},j_*\beta)}^{\bullet}$ is perfect.
\end{proposition}
\begin{proof}
Let us prove that the cartesian product is isomorphic to a disjoint union of components corresponding to all homology classes $\eta\in A_1(X^{\prime})$ such that $j_*\eta=j_*\beta$ and $p_*\eta=p_*\beta$. Let $P=\overline{M}_{g,n}(X,p_*\beta)\times_{\overline{M}_{g,n}(\pp, (i\circ p)_*\beta)}\overline{M}_{g,n}(\pp^{\prime},j_*\beta)$. From the universal property of cartesian products we obtain a map $$\psi:\overline{M}_{g,n}(X^{\prime},\beta)\to P.$$ Conversely, let $(f_1:C_X\to X, f_2:C_{\pp^{\prime}}\to\pp^{\prime})\in P$. Let $C_{\pp^{\prime}}^{stab}$ be the source curve of the stabilization of the composite map $C_{\pp^{\prime}}\to\pp^{\prime}\to\pp$. As $i$ is an embedding we have that $C_X$ is isomorphic to  $C_{\pp^{\prime}}^{stab}$. We have thus obtained a natural map $C_{\pp^{\prime}}\to C_X\to X$. From the universal property of cartesian products we obtain a unique map $f:C_{\pp^{\prime}}\to X^{\prime}$ commuting with $f_1$ and $f_2$. Let $\eta=f_*[C_{\pp^{\prime}}]$. Then $\eta$ satisfies  $j_*\eta=j_*\beta_1$ and $p_*\eta=p_*\beta_2$. Let $P_{\beta}$ define the locus in $P$ which by the above procedure induces a map $f:C_{\pp^{\prime}}\to X^{\prime}$ such that $f_*[C]=\beta$. The above translates into the existence of a map $\phi:P_{\beta}\to \overline{M}_{g,n}(X^{\prime},\beta)$. As $\psi$ and $\phi$ are inverse to each other, the statement follows. 
\\The proof of part (ii) follows by Proposition \ref{cbg} (i) applied to the morphism $i$, followed by Theorem \ref{grothmaps} with $f=p$ and $T:=L^{\vee}_{X/\pp}$.
\\If $g=0$ and $\pp$ is convex then $\overline{M}_{g,n}(\pp, (i\circ p)_*\beta)$ is unobstructed and therefore $E_{\overline{M}_{g,n}(X,p_*\beta)/\overline{M}_{g,n}(\pp,(i\circ p)_*\beta)}^{\bullet}$ is perfect. The claim now follows from the first part of the proof.
\end{proof}

\begin{example}
Let us consider a cartesian diagram of smooth projective varieties as above. The induced commutative diagram of moduli spaces of stable maps will not be cartesian in general. One counterexample is the case of $\pp$ is a point $P:=Spec\ k$, $\pp^{\prime}:=Y$ and $X^{\prime}:=X\times Y$. The diagram 
\begin{equation*}
 \xymatrix{\overline{M}_{g,n}(X\times Y,(\beta_1, \beta_2))\ar[r]^-{\bar{p_2}}\ar[d]^{\bar{p_1}}&\overline{M}_{g,n}(Y,\beta_2)\ar[d]
\\\overline{M}_{g,n}(X,\beta_1)\ar[r]&\overline{M}_{g,n}(P, 0)}
\end{equation*}
is \emph{not} cartesian. Let $M$ be the cartesian product $\overline{M}_{g,n}(Y,\beta_2)\times_{\overline{M}_{g,n}(P, 0)}\overline{M}_{g,n}(X,\beta_1)$. Then, we still have a nice relation between the virtual class of $M$ and the virtual class of $\overline{M}_{g,n}(X\times Y,(\beta_1, \beta_2))$. This was studied by Behrend (see \cite{b2}, Theorem 1).
\end{example}
 
\begin{example}
Let
\begin{equation*}
 \xymatrix{X^{\prime}\ar[r]^j\ar[d]^p&\pp^{\prime}\ar[d]^q
\\X\ar[r]^i&\pp}
\end{equation*}
be a cartesian diagram of smooth projective varieties as in the above proposition. Let us moreover suppose that $i$ and $j$ induce injective morphisms of groups $i_*:A_1(X)\to A_1(\pp)$, respectively $j_*:A_1(X^{\prime})\to A_1(\pp^{\prime})$ (but $i$ is not supposed to be injective!). Then, the corresponding commutative diagram between moduli spaces of stable maps is cartesian.
\begin{proof}
 Let us fix a scheme $S$ and let us consider $f:C_{\pp}\to \pp$ an element in $\overline{M}_{g,n}(\pp,(i\circ p)_*\beta)(S)$. As before, let us consider $f_1:C_{\pp^{\prime}}\to \pp^{\prime}$ an object in $\overline{M}_{g,n}(\pp^{\prime},j_*\beta)(S)$ and $f_2:C_{X}\to X$ an object in $\overline{M}_{g,n}(X,p_*\beta)(S)$ which both map to $f:C_{\pp}\to \pp$. We have that $C_{\pp}$ is canonically isomorphic to $C_{\pp^{\prime}}^{stab}$ the source curve of the stabilization of the composite map $C_{\pp^{\prime}}\to\pp^{\prime}\to\pp$. Then, by our hypothesis on $i$ we have that the curve $C_X$ is also canonically isomorphic to $C_{\pp^{\prime}}^{stab}$. We have thus obtained a commutative diagram 
\begin{equation*}
 \xymatrix{C_{\pp^{\prime}}\ar[r]\ar[dd]&\pp^{\prime}\ar[dr]
\\&&\pp
\\C_{\pp^{\prime}}^{stab}\ar[r]&X\ar[ur]}
\end{equation*}
and therefore by the universal property of cartesian products, we have obtained a canonical map $C_{\pp^{\prime}}\to X^{\prime}$. As in the proof of the proposition our hypothesis on $j$ implies that the map $\overline{M}_{g,n}(X^{\prime},\beta)\to P$ is surjective.
\end{proof}

\end{example}

\begin{remark}\label{condand}
 If $i$ is a closed embedding and the natural map $A_1(X^{\prime})\to A_1(\pp^{\prime})\oplus A_1(X)$ is injective then diagram (\ref{hopefullycart}) is cartesian. This follows from the fact that the above condition implies that the natural map  $$\overline{M}_{g,n}(X^{\prime},\beta)\to\overline{M}_{g,n}(X,p_*\beta)\times_{\overline{M}_{g,n}(\pp, (i\circ p)_*\beta)}\overline{M}_{g,n}(\pp^{\prime},j_*\beta)$$ is surjective.
\end{remark}
\begin{example}
Let us now look at an example where our construction of virtual pull-backs does not apply. If we consider $\overline{M}_{g,n}(X,\beta)\stackrel{\bar{i}}{\rightarrow}\overline{M}_{g,n}(\pp, i_*\beta)$ with $\pp$ a convex space, then the construction applies without further conditions only in genus zero. In general (higher genus or non-convex $\pp$) $h^{-2}(E_{\overline{M}_{g,n}(X,\beta)/\overline{M}_{g,n}(\pp, i_*\beta)})$ might not vanish.
\\To see an example, let us consider $i:\pp^r\hookrightarrow\pp^r\times\pp^s$, the inclusion into the first factor. Then we have an induced map $\overline{M}_{g,n}(\pp^r,d_1)\hookrightarrow\overline{M}_{g,n}(\pp^r\times\pp^s,(d_1,0))$. From Corollary \ref{blup} we obtain that the dual relative obstruction theory of $\\bar{i}$ is $\mathcal{R}^{\bullet}\pi_*ev^*N_{\pp^r/\pp^r\times\pp^s}$. We have that the normal bundle $N_{\pp^r/\pp^r\times\pp^s}$ is isomorphic to $\mathcal{O}_{\pp^r}^{\oplus s}$. Since $\mathcal{R}^{1}\pi_*f^*(\mathcal{O}_{\pp^r}^{\oplus s})$ is non-zero for $g\geq1$, the (dual) relative obstruction theory will never be perfect.
\end{example}

\subsection{Pulling back divisors}
Let $\pp$ be a convex variety and $d\in A_1(\pp)$ be the class of a curve. If $X\stackrel{i}\hookrightarrow\pp$ is an embedding of smooth projective varieties, then $i$ induces a morphism $\overline{M}_{0,n}(X,d)\stackrel{\bar{i}}\hookrightarrow\overline{M}_{0,n}(\pp,d)$ where we made the convention that $\overline{M}_{0,n}(X,d)$ is the union of all $\overline{M}_{0,n}(X,\beta)$ such that $i_*\beta=d$.  Let $D_{\pp}:=D_{\pp}(0,n_1,d_1\mid 0,n_2,d_2)$ be a boundary divisor in $\overline{M}_{0,n}(\pp,d)$ that comes with a virtual class obtained by pull-back along the obvious forgetful morphism
\begin{equation*}
 D_{\pp}\to\mathfrak{M}_{0,n_1+1}\times\mathfrak{M}_{0,n_2+1}
\end{equation*}
and analogously we have a boundary divisor $D_{X}:=D_{X}(0,n_1,d_1\mid 0,n_2,d_2)$ in $\overline{M}_{0,n}(X,d)$ equipped with a virtual fundamental class. Constructing the following cartesian diagram
\begin{equation*}
\xymatrix{D_{X}\ar[r]\ar[d]&D_{\pp}\ar[r]\ar[d]&\mathfrak{M}_{0,n_1+1}\times\mathfrak{M}_{0,n_2+1}\ar[d]\\
\overline{M}_{0,n}(X,d)\ar[r]^{\bar{i}}&\overline{M}_{0,n}(\pp,d)\ar[r]&\mathfrak{M}}
\end{equation*}
we get
 \begin{align*}
\bar{i}^![D_{\pp}(0,n_1,d_1\mid 0,n_2,d_2)]^{\vv}=[D_{X}(0,n_1,d_1\mid 0,n_2,d_2)]^{\vv}.
\end{align*}
Indeed, it is easy to check that the obstructions are compatible.
\begin{remark}
One could na\"ively hope to obtain new relations between the rational GW invariants of $X$ by pulling back the WDVV relations in $\pp$. The above shows that for any $X\hookrightarrow\pp$ pulling-back the WDVV equations in $\overline{M}_{0,n}(\pp,d)$ gives the WDVV equations in $\overline{M}_{0,n}(X,d)$.
\end{remark}

\subsection{Blow-ups}\label{blup}

Let $X$ be a smooth $r$-dimensional projective variety, $Y\subseteq X$ a smooth $r^{\prime}$-codimensional subvariety and $p_X:\tilde{X}\to X$ the blow-up of $X$ in $Y$, with exceptional divisor $E$.

\begin{definition} \label{lifting}
For a blow up $p_X:\tilde{X}\to X$ and a class $\beta\in A_1(X)$ we call the class $p_X^!\beta$ the lifting of $\beta$ and we denote it by $\tilde{\beta}$, where $p_X^!$ is the refined intersection product of \cite{f}, Chapter 8.
\end{definition}

\begin{remark} The lifting of $\beta$ satisfies two basic properties that follow trivially from the projection formula, namely $(p_X)_*\tilde{\beta}=\beta$ and $\tilde{\beta}\cdot E=0$.
 
\end{remark}
\begin{lemma}
The moduli space of stable maps to $\tilde{X}$ of class $\tilde{\beta}$ and the moduli space of stable maps to $X$ of class $\beta$ have the same virtual dimension.
\end{lemma}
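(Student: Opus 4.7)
The plan is to compute both virtual dimensions directly from the standard formula for Gromov--Witten theory, $\vdim \overline{M}_{g,n}(X,\beta) = \int_\beta c_1(T_X) + (\dim X - 3)(1-g) + n$, and show the difference vanishes using the two properties of the lifting recorded in the remark just above the lemma.

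First I would note that $\dim \tilde{X} = \dim X = r$, so the topological contribution $(r-3)(1-g) + n$ is the same on both sides. The only thing to verify is therefore the equality of the degree terms:
\begin{equation*}
\int_{\tilde{\beta}} c_1(T_{\tilde{X}}) = \int_\beta c_1(T_X).
\end{equation*}
Next I would use the standard formula for the canonical class of a smooth blow-up along a smooth center, namely $K_{\tilde{X}} = p^* K_X + (r'-1)E$, which gives $c_1(T_{\tilde{X}}) = p^* c_1(T_X) - (r'-1)E$ in $A^1(\tilde{X})$.

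Pairing with $\tilde{\beta}$, the first term reduces by the projection formula:
\begin{equation*}
\int_{\tilde{\beta}} p^* c_1(T_X) = \int_{p_* \tilde{\beta}} c_1(T_X) = \int_\beta c_1(T_X),
\end{equation*}
using $p_* \tilde{\beta} = \beta$ from the remark. The second term vanishes because $\tilde{\beta} \cdot E = 0$, again by the remark. Combining these gives the required equality of degree terms, hence of virtual dimensions.

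There is essentially no obstacle here: the two characterizing properties of the lifting $\tilde{\beta} = p^!\beta$ are exactly tailored to kill the discrepancy between $c_1(T_X)$ and $c_1(T_{\tilde{X}})$. The proof is a one-line computation once one writes down the canonical-bundle formula for a blow-up and applies the projection formula.
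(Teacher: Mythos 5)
Your proof is correct and is essentially identical to the paper's: both compute the virtual dimension via the formula $(1-g)(r-3)-K\cdot\beta+n$, apply the blow-up formula $K_{\tilde{X}}=p^*K_X+(r'-1)E$, and conclude using $p_*\tilde{\beta}=\beta$ (projection formula) and $\tilde{\beta}\cdot E=0$.
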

\begin{proof}
By \cite{f} we know that 
\begin{equation*}
K_{\tilde{X}}=p_X^*K_{X}+(r^{\prime}-1)E
\end{equation*}
and therefore the virtual dimension of $\overline{M}_{g,n}(\tilde{X},\tilde{\beta})$ is
\begin{align*} 
\vdim(\overline{M}_{g,n}(\tilde{X},\tilde{\beta}))&=(1-g)(r-3)-K_{\tilde{X}}\cdot\tilde{\beta}+n
\\&=(1-g)(r-3)-[p_X^*K_{X}+(r^{\prime}-1)E]\cdot\tilde{\beta}+n
\\&=(1-g)(r-3)-p_X^*K_{X}\cdot\tilde{\beta}+n
\\&=(1-g)(r-3)-K_{X}\cdot\beta+n
\\&=\vdim(\overline{M}_{g,n}(X,\beta)).
 \end{align*}

\end{proof}

\begin{lemma}\label{andreas}
 In notations as before, the natural projection $p_{X}:\tilde{X}\to X$ induces a morphism $\bar{p}_{X}:\overline{M}_{0,n}(\tilde{X},\tilde{\beta})\to\overline{M}_{0,n}(X,\beta)$. If $X=\pp$ is a homogeneous space and $\tilde{X}=\tilde{\pp}$ the blow up of $\pp$, then $$({\bar{p}}_{\pp})_*[\overline{M}_{0,n}(\tilde{\pp},\tilde{\beta})]^{\vv}=[\overline{M}_{0,n}(\pp,\beta)]^{\vv}.$$
\end{lemma}
\begin{proof}
 The proof is a straightforward generalization of \cite{G:01}, Proposition 2.2. Let us write the proof for completeness. Since $\pp$ is convex the stack $\overline{M}_{0,n}(\pp,\beta)$ is smooth of expected dimension $d$. As $\overline{M}_{0,n}(\pp,\beta)$ is connected (see \cite{kp}) it follows taht $\overline{M}_{0,n}(\pp,\beta)$ is also irreducible. This shows that
 \begin{equation*}
(\bar{p}_{\pp})_*[\overline{M}_{0,n}(\tilde{\pp},\tilde{\beta})]^{\vv}=\alpha[\overline{M}_{0,n}(\pp,\beta)]
\end{equation*}
for some $\alpha\in\Q $. If we show that $\bar{p}_{\pp}$ is a local isomorphism around a generic point $\mathcal{C}:=(C, x_1,...,x_n,f)\in Z_i$ for any$1\leq i\leq k$ then by \cite{G:01} we have $$(\bar{p}_{\pp})_*[\overline{M}_{0,n}(\tilde{\pp},\tilde{\beta})]^{\vv}=\overline{M}_{0,n}(\pp,\beta).$$
Let us denote the locus $\mathcal{C}\in\overline{M}_{0,n}(\pp,\beta)$ of maps such that $\bar{p}^{-1}(\mathcal{C})$ is not a point by $M$. Then $M$ is embedded in $M^{\prime}$ the locus of maps to $\pp$ such that $f(C)$ intersects the blown up locus $Y$. Let us analyze the dimension of $M^{\prime}$. By Kleiman's transversality theorem we have that the locus of smooth curves in $\pp$ which intersect a given subvariety $Y$ of codimension $c$ has the expected dimension $d+1-c$.  Using fact that $Y$ has codimension at least two, we obtain that $M^{\prime}$ this dimension is at most $d-1$. This shows that $\bar{p}$ is an isomorphism around the generic point $\mathcal{C}\in\overline{M}_{0,n}(\pp,\beta)$.  
\end{proof}
\begin{proposition}\label{main} Let $X$ be a smooth projective subvariety of some homogeneous space $\pp$ and let us assume that there exists $Z$ a smooth subvariety of $\pp$, such that $X$ and $Z$ intersect transversely. Let $Y:=X\cap Z$ and $\tilde{X}$ be the blow-up of $X$ along $Y$. Then for any non-negative integer $n$ and any $\beta\in A_1(X)$ with lifting $\tilde{\beta}\in A_*(\tilde{X})$ $$(\bar{p}_{X})_*[\overline{M}_{0,n}(\tilde{X},\tilde{\beta})]^{\vv}=[\overline{M}_{0,n}(X,\beta)]^{\vv}.$$
\end{proposition}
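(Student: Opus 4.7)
The plan is to pull back the equality of Lemma \ref{andreas} from the convex ambient variety $\pp$ to its subvariety $X$ using the virtual pull-back apparatus developed above. Let $\tilde{\pp}$ denote the blow-up of $\pp$ along $Z$. By transversality of $X$ and $Z$, the strict transform of $X$ in $\tilde{\pp}$ coincides with the blow-up $\tilde{X}\to X$ along $X\cap Z$, so one obtains a natural square of moduli spaces
\begin{equation*}
\xymatrix{\overline{M}_{0,n}(\tilde{X},\tilde{\beta}) \ar[r]^-{\tilde{i}} \ar[d]_{\tilde{p}} & \overline{M}_{0,n}(\tilde{\pp},\tilde{\beta}) \ar[d]^{p} \\ \overline{M}_{0,n}(X,\beta) \ar[r]^-{i} & \overline{M}_{0,n}(\pp,\beta)}
\end{equation*}
which is cartesian: a stable map to $\tilde{\pp}$ whose $p$-image factors through $X$ must have set-theoretic image contained in $\tilde{X}\cup E$, and the condition $\tilde{\beta}\cdot E=0$ rules out components entirely supported in the exceptional divisor, so the image actually lies in $\tilde{X}$.

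Next I construct the relative obstruction theories for the closed immersions $i$ and $\tilde{i}$ over the Artin stack $\mathfrak{M}:=\mathfrak{M}_{0,n}$ of prestable curves, along the lines of Construction \ref{constr}. The short exact sequence $0\to T_X\to T_\pp|_X\to N_{X/\pp}\to 0$ (and its $\tilde{X}\subset\tilde{\pp}$ analogue) yields, after applying $R\pi_*f^*$ to the universal curve, distinguished triangles of absolute and relative obstruction theories. In genus zero, $R\pi_*$ of a coherent sheaf on a family of prestable rational curves has cohomology only in degrees $0$ and $1$, so the relative obstruction theories $E^\bullet_{i}$ and $E^\bullet_{\tilde{i}}$ are perfect in $[-1,0]$; moreover, they fit by construction into distinguished triangles with the absolute obstruction theories over $\mathfrak{M}$, furnishing compatible triples in the sense of Definition \ref{compatib}. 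Corollary \ref{virtual} then gives
\begin{align*}
i^!\,[\overline{M}_{0,n}(\pp,\beta)]^{\vv} &= [\overline{M}_{0,n}(X,\beta)]^{\vv}, \\
\tilde{i}^!\,[\overline{M}_{0,n}(\tilde{\pp},\tilde{\beta})]^{\vv} &= [\overline{M}_{0,n}(\tilde{X},\tilde{\beta})]^{\vv}.
\end{align*}

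With these two ingredients assembled, the proof is a one-line manipulation. Applying $i^!$ to the identity of Lemma \ref{andreas} and commuting $i^!$ past the proper push-forward $p_*$ via Theorem \ref{relations}(i), one finds
\begin{equation*}
\tilde{p}_*\,[\overline{M}_{0,n}(\tilde{X},\tilde{\beta})]^{\vv} = \tilde{p}_*\,\tilde{i}^!\,[\overline{M}_{0,n}(\tilde{\pp},\tilde{\beta})]^{\vv} = i^!\,p_*\,[\overline{M}_{0,n}(\tilde{\pp},\tilde{\beta})]^{\vv} = i^!\,[\overline{M}_{0,n}(\pp,\beta)]^{\vv} = [\overline{M}_{0,n}(X,\beta)]^{\vv},
\end{equation*}
which is the desired formula. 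The main technical obstacle is not the formal manipulation but the verification of perfection and compatibility of the relative obstruction theories for the upper row $\tilde{i}$: unlike the lower row, one cannot invoke smoothness of $\overline{M}_{0,n}(\tilde{\pp},\tilde{\beta})$ over $\mathfrak{M}$ (since $\tilde{\pp}$ need not be convex) to appeal to the Example after Construction \ref{constr}, so one must check by hand that the genus-zero $R\pi_*$ construction yields a perfect two-term complex fitting into a compatible morphism of triangles with the cotangent complexes. This is precisely where the genus-zero hypothesis is used in an essential way, and it is also the reason the analogous statement fails in higher genus.
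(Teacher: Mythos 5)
Your overall strategy coincides with the paper's: form the cartesian square of moduli spaces, show that both vertical inclusions admit virtual pull-backs sending virtual class to virtual class, and combine Theorem \ref{relations}(i) with Lemma \ref{andreas}. The gap lies precisely in the step you single out as the crux, the perfection of the relative obstruction theory $E^{\bullet}_{\tilde{i}}$ for the upper inclusion. You argue that since $R\pi_*$ of a coherent sheaf on a family of genus-zero prestable curves has cohomology only in degrees $0$ and $1$, the complex $E^{\bullet}_{\tilde{i}}$ is perfect in $[-1,0]$. But the dual of $E^{\bullet}_{\tilde{i}}$ is a shift of $\mathcal{R}^{\bullet}\pi_*ev^*N_{\tilde{X}/\tilde{\pp}}$, so cohomology of $R\pi_*$ in degrees $0$ and $1$ places the cohomology of $E^{\bullet}_{\tilde{i}}$ in degrees $-1$ and $-2$: genus zero alone gives perfection in $[-2,-1]$, whereas the whole apparatus (the vector bundle stack $h^1/h^0$, Construction \ref{constr}, Definition \ref{compatib}, Corollary \ref{virtual}) requires perfection in $[-1,0]$, i.e.\ the vanishing $h^{-2}(E^{\bullet}_{\tilde{i}})=0$, equivalently $\mathcal{R}^1\pi_*ev^*N_{\tilde{X}/\tilde{\pp}}=0$. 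This is exactly the failure mode flagged in the Remark after Construction \ref{constr}, and it is not automatic even in genus zero.

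The missing input is the convexity of $\pp$, which in your write-up enters only through Lemma \ref{andreas}. For the lower row it gives $\mathcal{R}^1\pi_*ev^*T_{\pp}=0$, hence (via the surjection onto $\mathcal{R}^1\pi_*ev^*N_{X/\pp}$ coming from $0\to T_X\to T_{\pp}|_X\to N_{X/\pp}\to 0$) the vanishing that makes $E^{\bullet}_{i}$ a vector bundle in degree $-1$. For the upper row one cannot argue directly, since $\tilde{\pp}$ is not convex; the paper instead uses transversality to get $N_{\tilde{X}/\tilde{\pp}}=p^*N_{X/\pp}$ and a cohomology-and-base-change argument (replacing $ev^*N_{X/\pp}$ by a Behrend-type complex $K$ with $\mathcal{R}^{\bullet}\pi_*K$ a complex of bundles) to conclude $\mathcal{R}^{i}\pi_*ev^*N_{\tilde{X}/\tilde{\pp}}=p^*\mathcal{R}^{i}\pi_*ev^*N_{X/\pp}$, whence the needed vanishing of $\mathcal{R}^1$ and the identification $\mathfrak{E}_{\tilde{i}}=p^*\mathfrak{E}_{i}$ (after which one may invoke either Corollary \ref{virtual} for $\tilde{i}$, as you do, or Theorem \ref{relations}(iii), as the paper does). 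Without this step your equation $\tilde{i}^![\overline{M}_{0,n}(\tilde{\pp},\widetilde{i_*\beta})]^{\vv}=[\overline{M}_{0,n}(\tilde{X},\tilde{\beta})]^{\vv}$ is unjustified. The remainder of your argument --- the cartesianness of the square, the lower-row identity, and the final chain of equalities --- matches the paper.
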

\begin{proof}
If $Y=X\cap Z$, $\tilde{X}$ is the blow-up of $X$ along $Y$ and $\tilde{\pp}$ is the blow-up of $\pp$ along $Z$
then the diagram 
 \begin{equation*}
 \xymatrix {\tilde{X} \ar[r]^{j}\ar[d]_{p_X} &\tilde{\pp}\ar[d]^{p_{\pp}}\\
             X\ar[r]^i & \pp}
\end{equation*}
is cartesian. By Theorem \ref{relations} (ii) we have that 
\begin{equation}\label{injhomology}
p_{\pp}^!i_*\beta=j_*p_{X}^!\beta.
\end{equation}
Let us take $\delta\in A_1(\tilde{X})$ such that $({p_X}_*\delta,j_*\delta)=(\beta, p_{\pp}^!i_*\beta)$. By ${p_X}_*\delta=\beta$ we get that $\delta=p^!\beta+de$, where $d\in\Z$ and $e$ is the class of a curve contained in some fiber of $p_X$. By equality (\ref{injhomology}) we get $j_*\delta=p_{\pp}^!i_*\beta+de$. Using $j_*\delta=p_{\pp}^!i_*\beta$ in the equality above we obtain $d=0$, which shows that the map $({p_X}_*,j_*):A_1(\tilde{X})\to A_1(X)\oplus A_1(\tilde{\pp})$ is injective. Using equality (\ref{injhomology}) and Remark \ref{condand} we obtain a cartesian diagram
 \begin{equation}\label{blmod}
 \xymatrix {\overline{M}_{0,n}(\tilde{X},\tilde{\beta})\ar[r]^{\bar{j}}\ar[d]_{\bar{p}_X} &\overline{M}_{0,n}(\tilde{\pp}\ar[d]^{\bar{p}_{\pp}},\widetilde{i_*\beta})\\
             \overline{M}_{0,n}(X,\beta)\ar[r]^{\bar{i}} & \overline{M}_{0,n}(\pp,i_*\beta).}
\end{equation}
 In order to apply the virtual push-forward machinery to this diagram, we first need to analyze the obstruction theories involved. By Construction \ref{constr} and Corollary \ref{virtual} applied to $\bar{i}$, we have 
\begin{equation}\label{down}
\bar{i}^![\overline{M}_{0,n}(\pp, i_*\beta)]^{\vv}=[\overline{M}_{0,n}(X,\beta)]^{\vv}.
\end{equation}
We know that $p_X^*N_{X/\pp}= N_{\tilde{X}/\tilde{\pp}}$. By Proposition \ref{cartesian} (ii) we obtain that $$E_{\overline{M}_{0,n}(\tilde{X},\tilde{\beta})/\overline{M}_{0,n}(\tilde{\pp}, \widetilde{i_*\beta})}^{\bullet}=\bar{p}_X^*E_{\overline{M}_{0,n}(X,\beta)/\overline{M}_{0,n}(\pp, i_*\beta)}^{\bullet}.$$ This shows in particular that $E_{\overline{M}_{0,n}(\tilde{X},\tilde{\beta})/\overline{M}_{0,n}(\tilde{\pp}, \widetilde{i_*\beta})}^{\bullet}$ is perfect. Applying Corollary \ref{virtual} to $\bar{j}$ we get
\begin{equation*}
\bar{j}^![\overline{M}_{0,n}(\tilde{\pp
},\widetilde{ i_*\beta})]^{\vv}=[\overline{M}_{0,n}(\tilde{X},\tilde{\beta})]^{\vv}
\end{equation*}
and by Proposition \ref{relations} (iii) we obtain that 
\begin{equation}\label{up}\bar{i}^![\overline{M}_{0,n}(\tilde{\pp
},\widetilde{ i_*\beta})]^{\vv}=[\overline{M}_{0,n}(\tilde{X},\tilde{\beta})]^{\vv}.
\end{equation}
Theorem \ref{relations} (i) gives 
\begin{equation}\label{proj}
\bar{i}^!(\bar{p}_{\pp})_*[\overline{M}_{0,n}(\tilde{\pp}, \widetilde{i_*\beta})]^{\vv}=(\bar{p}_X)_*\bar{i}^![\overline{M}_{0,n}(\tilde{\pp}, \widetilde{i_*\beta})]^{\vv}.
 \end{equation}
By Proposition \ref{andreas} we have
\begin{equation}\label{convexcase}
\bar{i}^!(\bar{p}_{\pp})_*[\overline{M}_{0,n}(\tilde{\pp}, \widetilde{i_*\beta})]^{\vv}=\bar{i}^![\overline{M}_{0,n}(\pp, i_*\beta)]^{\vv}.
\end{equation}
Gathering all together, equations (\ref{down}), (\ref{up}), (\ref{proj}), (\ref{convexcase}) translate into
\begin{equation*}
(\bar{p}_X)_*[\overline{M}_{0,n}(\tilde{X}, \tilde{\beta})]^{\vv}=[\overline{M}_{0,n}(X,\beta)]^{\vv}.
\end{equation*}
\end{proof}
The projection formula gives the following Corollary.
\begin{corollary}
 Let $X$ and $Y$ as above, and let $\gamma\in  A^*(X)^{\otimes n}$ be any $n$-tuple of classes such that $\sum codim(\gamma_i)=vdim\overline{M}_{0,n}(X,\beta)$. Then, $I_{0,n,\tilde{\beta}}^{\tilde{X}}(p_X^*\gamma)={I}_{0,n,\beta}^{X}(\gamma)$.
\end{corollary}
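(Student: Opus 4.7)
The plan is to unwind both sides of the claimed equality using the definition of Gromov--Witten invariants as intersection numbers on the moduli stacks of stable maps, and then reduce the statement to Proposition \ref{main} via the projection formula for the induced morphism between moduli stacks.

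First, I would set up notation. By functoriality of moduli of stable maps, the blow-up $p\colon \tilde{X}\to X$ induces a morphism between moduli stacks which (by slight abuse of notation) I also denote $p\colon \overline{M}_{0,n}(\tilde{X},\tilde{\beta})\to \overline{M}_{0,n}(X,\beta)$, and this morphism is proper. It intertwines the evaluation maps: denoting the evaluation maps upstairs by $\widetilde{ev}_i$ and downstairs by $ev_i$, one has $p\circ\widetilde{ev}_i=ev_i\circ p$ on the moduli level. Consequently, for each class $\gamma_i\in A^*(X)$,
\begin{equation*}
\widetilde{ev}_i^{\,*}\,p^*\gamma_i \;=\; p^*\,ev_i^{\,*}\gamma_i,
\end{equation*}
so that $\prod_i\widetilde{ev}_i^{\,*}(p^*\gamma_i)=p^*\!\prod_i ev_i^{\,*}\gamma_i$.

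Next, I would invoke the ordinary projection formula (valid because $p$ between the moduli stacks is proper): for any cohomology class $\alpha$ on $\overline{M}_{0,n}(X,\beta)$ and any cycle $\xi$ on $\overline{M}_{0,n}(\tilde{X},\tilde{\beta})$,
\begin{equation*}
p_*\bigl(p^*\alpha\cap\xi\bigr)=\alpha\cap p_*\xi.
\end{equation*}
Applying this with $\xi=[\overline{M}_{0,n}(\tilde{X},\tilde{\beta})]^{\vv}$ and $\alpha=\prod_i ev_i^{\,*}\gamma_i$, and taking degrees, I obtain
\begin{equation*}
I_{0,n,\tilde{\beta}}^{\tilde{X}}(p^*\gamma)\;=\;\int_{[\overline{M}_{0,n}(\tilde{X},\tilde{\beta})]^{\vv}}p^*\!\prod_i ev_i^{\,*}\gamma_i\;=\;\int_{p_*[\overline{M}_{0,n}(\tilde{X},\tilde{\beta})]^{\vv}}\prod_i ev_i^{\,*}\gamma_i.
\end{equation*}

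Finally, I would apply Proposition \ref{main}, which gives exactly $p_*[\overline{M}_{0,n}(\tilde{X},\tilde{\beta})]^{\vv}=[\overline{M}_{0,n}(X,\beta)]^{\vv}$, so that the right-hand side equals $I_{0,n,\beta}^{X}(\gamma)$. There is no genuine obstacle here beyond carefully checking that the induced map between moduli stacks intertwines evaluations and is proper; the entire nontrivial content --- the equality of the push-forward of virtual classes --- has already been established in Proposition \ref{main}. The corollary is thus essentially a formal consequence of the projection formula combined with that proposition.
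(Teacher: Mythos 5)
Your proposal is correct and follows exactly the paper's (one-line) argument: the paper derives the corollary from Proposition \ref{main} by the projection formula, which is precisely what you spell out, including the compatibility of evaluation maps and the properness of the induced map on moduli stacks.
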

\begin{remark}
 The equality in the statement of proposition \ref{main} was obtained in \cite{h} in a more general context, namely under the assumption $N_{Y/X}$ is convex with an extra minor assumption. Lai analyzes the map  $\bar{p}_X:\overline{M}_{0,n}(\tilde{X},\tilde{\beta})\rightarrow\overline{M}_{0,n}(X,\beta)$ and he uses \textit{absolute} obstruction theories (see [ibid.], Section 2). These induce a perfect relative obstruction to $\bar{p}$. Lai analyzes the normal cones of $\overline{M}_{0,n}(\tilde{X},\tilde{\beta})$ and $\overline{M}_{0,n}(X,\beta)$ and he uses the relation between them in order to obtain that $(\bar{p}_X)_*[\overline{M}_{0,n}(\tilde{X}, \tilde{\beta})]^{\vv}=[\overline{M}_{0,n}(X,\beta)]^{\vv}$ (see \cite{h} Theorem 4.11). In our language Lai's assumptions imply that $\bar{p}_X$ admits a virtual pull-back. We should stress however, that we cannot use the usual relative obstruction theories to $\mathfrak{M}_{0,n}$ in order to deduce $\bar{p}^![\overline{M}_{0,n}(X,\beta)]^{\vv}=[\overline{M}_{0,n}(\tilde{X},\tilde{\beta})]^{\vv}$. More precisely, the following diagram 
\begin{equation*}  \xymatrix{\overline{M}_{0,n}(\tilde{X},\tilde{\beta})\ar[rr]^{\bar{p}}\ar[rd]_{\epsilon_{\tilde{X}}}&&\overline{M}_{0,n}(X,\beta)\ar[ld]^{\epsilon_{X}}\\
&\mathfrak{M}_{0,n}}
\end{equation*}
is not commutative. To see this, one can take a map $(C,x_1,...,x_n, f)\in \overline{M}_{0,n}(\tilde{X},\tilde{\beta})$, with $C$ a reducible curve with two components $C_1$ and $C_2$ intersecting in one point and such that $C_1$ has no marked points and it is contracted by $p\circ f$. We have that $$\epsilon_{\tilde{X}}(C,x_1,...,x_n, f)=C$$ while $$\epsilon_X\circ\bar{p}(C,x_1,...,x_n, f)=C_2.$$ This shows that Corollary \ref{virtual} does not apply to the above diagram.
\end{remark}
\begin{remark}
 If $X$ is the zero-locus of a section $s\in H^0(\pp, V)$, for some convex vector bundle $V$ on $\pp$ and $Y$ respects the hypothesis of Proposition \ref{virtual}, then the equality $(\bar{p}_X)_*[\overline{M}_{0,n}(\tilde{X}, \tilde{\beta})]^{\vv}=[\overline{M}_{0,n}(X,\beta)]^{\vv}$ follows from the ``Conjecture'' proved in \cite{K:01} as described below. In notations of [ibid.] we have
\begin{equation*}
 \bar{i}_*[\overline{M}_{0,n}(X,\beta)]^{\vv}=c_{top}(\mathcal{R}^0(\pi_{\pp})_*ev_{\pp}^*V )\cdot[\overline{M}_{0,n}(\pp,i_*\beta)]^{\vv}.
\end{equation*}
Again, using the isomorphism in Proposition \ref{cartesian} (ii) we get the same relation with blow-ups, namely,
\begin{equation*}
 \bar{j}_*[\overline{M}_{0,n}(\tilde{X},\tilde{\beta})]^{\vv}=c_{top}(\bar{p}^*\mathcal{R}^0(\pi_{\tilde{\pp}})_*ev_{\tilde{\pp}}^*V)\cdot [\overline{M}_{0,n}(\tilde{\pp},\widetilde{i_*\beta})]^{\vv}.
\end{equation*}
Now, the equality follows from the projection formula.
\end{remark}

\subsection{Projective bundles}
Let $X$ be a a smooth projective variety of dimension $n$. Let $\C_X$ be the trivial line bundle on $X$ and $V:=W\oplus\C_X$ be a rank $r$ vector bundle on $X$ with non-zero Chern roots $\{c_1,...,c_{r-1}\}$. We denote by $p_X:\pp_X(V)\to X$ the associated projective bundle. It is well known that there exists an isomorphism $\varphi: A_1(\pp_X(V))\to A_1(X)\oplus f\cdot\Z$, where $f_X$ denotes the class of a curve in a fibre of $p_X$. Let us fix such an isomorphism. For this, let us consider the following exact sequence
\begin{equation*}
 0\stackrel{}{\rightarrow} A_1(\pp^{r-1})\stackrel{i}{\rightarrow} A_1(\pp_X(V))\stackrel{p}{\rightarrow} A_1(X)\stackrel{}{\rightarrow} 0
\end{equation*}
where $i$ is the map induced by the inclusion of a fiber of $p_X$ and $p$ denotes the push-forward by $p_X$. Then, taking $s_X:X\to\pp_X(V)$ the zero section of the projective bundle $\pp_X(V)$ we see that the map $s:A_1(X)\to A_1(\pp_X(V))$ induced by $s_X$ splits the sequence above. This fixes $\varphi$. 
 \begin{definition}
Let $\beta\in A_1(X)$, then we call $\tilde{\beta}:=s(\beta)$ the lifting of $\beta$.
\end{definition}
In these notations any class of a curve in $\pp_X(V)$ can be written uniquely as $\tilde{\beta}+qf_X$, for some $\beta\in A_1(X)$ and some $q\in\Z$. Let us consider $\alpha\in A^k(\overline{M}_{0,n}(\pp_X(V),\tilde{\beta}+qf_X))$ such that
\begin{equation}\label{dimension}
\vdim\overline{M}_{0,n}(\pp_X(V),\tilde{\beta}+qf_X)-k=\vdim \overline{M}_{0,n}(X,\beta).
\end{equation}
We will say that $(\beta, W, q, \alpha)$ satisfies condition (\ref{dimension}) or when there is no risk of confusion that $(W, q, \alpha)$ satisfies condition (\ref{dimension}).
\\In the same way as in the case of blow-ups, we will relate genus-zero GW invariants of  $\pp_X(V)$ to genus-zero GW invariants of $X$.
\begin{remark}\label{smirr}
Let $X$ be a convex variety, $\beta\in A_1(X)$, $W$ a rank-$(r-1)$ vector bundle on $X$, $q\in\Z$ and $\alpha\in A^*(\overline{M}_{0,n}(\pp_X(V),\tilde{\beta}+qf_X))$ satisfying condition (\ref{dimension}). Let $Z_1,...Z_k$ be the connected components of $\overline{M}_{0,n}(X,\beta)$. As $X$ is convex, by dimensional reasons $$(\bar{p}_X)_* \left(\alpha\cdot[\overline{M}_{0,n}(\pp_X(V),\tilde{\beta}+qf_X)]^{\vv}\right)=\sum_{i=1}^k N_i[Z_i],$$ for some $N_i\in\Q$, possibly zero.
\\In particular, if $X=\pp^1$, then $\overline{M}_{0,n}(X,\beta)$ is smooth, irreducible (and unobstructed) and therefore $$(\bar{p}_X)_*\left(\alpha\cdot[\overline{M}_{0,n}(\pp_X(V),\tilde{\beta}+qf_X)]^{\vv}\right)=N[\overline{M}_{0,n}(X,\beta)]^{\vv}$$ for some $N\in\Q$. 
\end{remark}
\begin{definition}\label{locconst}(i) In notations as above, we consider the locally constant function
\begin{equation*}
N_{X,W,\beta,q}(\alpha):\overline{M}_{0,n}(X,\beta)\to\Q
\end{equation*}
defined  by $N_{X,W,\beta,q}(\alpha)=N_i$ on $Z_i$.
\\(ii) Let $X=\pp^1$, $d=(d_1,...,d_{r-1})\in\Z^{r-1}$, $V=\mathcal{O}\oplus \mathcal{O}_{\pp^1}(d_1)\oplus...\oplus\mathcal{O}_{\pp^1}(d_{r-1})$,  and $k=(k_1,...,k_n)\in \N^n$. Let $\xi_{\pp_X(V)}=c_1(\mathcal{O}_{\pp_X(V)}(1))$ and assume that $\alpha=ev^*_i\xi^{k_i}$ satisfies the dimension condition (\ref{dimension}). We define $N(q,d,k)\in \Q$ by the formula $$(\bar{p}_X)_*\left(\alpha\cdot[\overline{M}_{0,n}(\pp_X(V),\tilde{1}+qf_X)]^{\vv}\right)=N(q,d, k)[\overline{M}_{0,n}(X,1)].$$
\end{definition}
\begin{remark}Let $X$ and $Y$ be smooth projective varieties, let $f:Y\to X$ be a morphism and $\beta_Y\in A_1(Y)$ such that $f_*\beta_Y=\beta$. Let $W$ be a vector bundle on $X$. Then there exists an induced map $h:\pp_Y(f^*V)\to\pp_X(V)$. This induces a map $\bar{h}:\overline{M}_{0,n}(\pp_Y(f^*V),\tilde{\beta}_Y+qf_Y)\to \overline{M}_{0,n}(\pp_X(V),\tilde{\beta}+qf_X)$
\end{remark}
\begin{proposition}\label{projb} Let $X$ be smooth convex projective varieties and let $f:Y\to X$ be a morphism of smooth projective varieties such that $$f_*:A_1(Y)\to A_1(X)$$ is injective. Let $W$ be a vector bundle on $X$ and $$\alpha\in A^*(\overline{M}_{0,n}(\pp_X(V),\tilde{\beta}+qf_X))$$ which satisfies the dimension condition (\ref{dimension}). Then
\\(i) $(\beta_Y, f^*W,q, \bar{h}^*\alpha)$ satisfies condition \ref{dimension}.
\\(ii)Let $Z_1,...,Z_k$ be the connected components of $\overline{M}_{0,n}(X,\beta)$. For any $i\in\{1,...,k\}$ let $N_{X,W,\beta,q}(\alpha)|_{Z_i}$ be the number from definition \ref{locconst} which corresponds to $Z_i$. Then, we have an equality
\begin{equation*}
(\bar{p}_Y)_*\left((\bar{h}^*\alpha)\cdot[\overline{M}_{0,n}(\pp_{Y}(f^*V),\tilde{\beta_Y}+qf_Y)]^{\vv}\right)=\sum_{i=1}^kN_{X,W,\beta,q}(\alpha)|_{Z_i}[Z_i]
\end{equation*}
\end{proposition}
\begin{proof}
(i) Let us consider the following cartesian diagram
\begin{equation*}
 \xymatrix{\overline{M}_{0,n}(\pp_{Y}(f^*V),\tilde {\beta_Y}+qf_Y)\ar[d]_{\bar{p}_{Y}}\ar[r]^{\bar{h}}&\overline{M}_{0,n}(\pp_X(V),\tilde{\beta}+qf_X)\ar[d]^{\bar{p}_X}\\
\overline{M}_{0,n}(Y,\beta_Y)\ar[r]^{\bar{f}}&\overline{M}_{0,n}(X,\beta).}
\end{equation*}
Applying Proposition \ref{cartesian} we obtain that the relative obstruction theories $E_{\bar{f}}$ and $E_{\bar{h}}$ are compatible. This shows that $(\beta_Y, f^*W,q, \bar{h}^*\alpha)$ satisfies condition \ref{dimension}.
\\(ii)Without loss of generality we may assume that $\overline{M}_{0,n}(X,\beta)$ is irreducible. Let us denote its unique connected component by $Z_1$. By Definition \ref{locconst} we have that $$(\bar{p}_X)_* \left(\alpha\cdot[\overline{M}_{0,n}(\pp_X(V),\tilde{\beta}+qf_X)]^{\vv}\right)=N_{X,W,\beta,q}(\alpha)|_{Z_1}[Z_1].$$ Applying Proposition \ref{relations} (iii) to the above diagram we obtain that
\begin{equation}\label{commut}
 \bar{f}^!((\bar{p}_X)_*\alpha\cdot[\overline{M}_{0,n}(\pp_X(V),\tilde{\beta}+qf_X)]^{\vv})=(\bar{p}_{Y})_*\bar{f}^!(\alpha\cdot[\overline{M}_{0,n}(\pp_X(V),\beta+qf_X)]^{\vv}).
\end{equation}
\\As the obstruction theories $\bar{p}_Y^*E_{\bar{f}}$ and $E_{\bar{h}}$ are compatible by Proposition \ref{cartesian}, we obtain that \begin{equation}\label{iii}\bar{f}^!_{\mathfrak{E}_{\bar{f}}}=\bar{h}^!_{\mathfrak{E}_{\bar{h}}}.
\end{equation}
 This shows that $$\bar{f}^!\left(\alpha\cdot[\overline{M}_{0,n}(\pp_X(V),\tilde{\beta}+qf_X)]^{\vv}\right)=\bar{h}^*\alpha\cdot \bar{f}^![\overline{M}_{0,n}(\pp_X(V),\tilde{\beta}+qf_X)]^{\vv}.$$ By Corollary \ref{virtual} we obtain $\bar{f}^![\overline{M}_{0,n}(X,\beta)]^{\vv}=[\overline{M}_{0,n}(Y,\beta_Y)]^{\vv}$ and using relation (\ref{iii}) we get
\begin{equation}\label{sus}
 \bar{f}^![\overline{M}_{0,n}(\pp_X(V),\tilde{\beta}+qf_X)]^{\vv}=[\overline{M}_{0,n}(\pp_{Y}(f^*V),\beta_Y+qf_Y)]^{\vv}.
\end{equation}
From equations (\ref{commut}) and (\ref{sus}) we see that
\begin{equation}\label{nj}
N_{X,W,\beta,q}(\alpha)|_{Z_1}=(\bar{p}_Y)_*\left((\bar{h}^*\alpha)\cdot[\overline{M}_{0,n}(\pp_{Y}(f^*V),\tilde{\beta_Y}+qf_Y)]^{\vv}\right).
\end{equation}
This concludes the proof.
\end{proof}
\begin{corollary}\label{convexbdl} We follow the notations of Proposition \ref{projb}. Let $X$ be a homogeneous space, $Y$ a rational curve. Let $$\alpha:=\prod_{i=1}^nev_i^*\xi^{k_i}\in A^*(\overline{M}_{0,n}(\pp_{X}(V),\tilde{\beta}+qf_{X}))$$ satisfy condition (\ref{dimension}). Suppose $f^*V\simeq \mathcal{O}\oplus\mathcal{O}(d_1)\oplus...\oplus\mathcal{O}(d_{r-1})$. Then $$(\bar{p}_{X})_* \left(\alpha\cdot[\overline{M}_{0,n}(\pp_{X}(V),\tilde{\beta}+qf_{X})]^{\vv}\right)=N(q,d, k)[\overline{M}_{0,n}(Y,\beta)]^{\vv}.$$
\end{corollary}
\begin{proof}
Since $X$ is a homogeneous space we have that  $\overline{M}_{0,n}(X,\beta)$ is irreducible. Let $h:\pp_C(f^*V)\to \pp_X(V)$ be the map induced by $f$. The claim follows from Proposition \ref{projb} with $Y\simeq\pp^1$, $\beta_Y=1$ and the equality $h^*\xi_{\pp_X(V)}=\xi_{\pp_Y(f^*V)}$.
\end{proof}
\begin{corollary}\label{jump} Let $d=(d_1,...,d_{r-1})$, $e=(e_1,...,e_{r-1})$ with $d_i,\ e_i\geq 0$, $\forall\ 0\leq i\leq r$ and $\sum_{i=1}^{r-1} d_i=\sum_{i=1}^{r-1} e_i$. Then $N(q,d,k)=N(q,e,k)$.
\end{corollary}
\begin{proof} In notations as in \ref{projb}, let $d:=\sum d_i$. By  \cite{ik} Lemma 2.4 there exists $m\gg n$  such that there exist $Y_d$, $Y_e$ rational curves on $G(r-1,m)$ with embeddings $i_d:Y_d\to G(r,m)$, $i_e:Y_e\to G(r,m)$ such that $i_d^*S\simeq\oplus\mathcal{O}(d_1)\oplus...\oplus\mathcal{O}(d_r)$ and  $i_e^*S\simeq \mathcal{O}\oplus\mathcal{O}(e_1)\oplus...\oplus\mathcal{O}(e_r)$. Here $S$ is the dual of the tautological subbundle on $G(r,m)$. Let $X=G(r,m)$, $V=S\oplus\mathcal{O}$ and $\beta=d$ and let $h_d:\pp_{Y_d}(i_d^*V)\to \pp_X(V)$ and $h_e:\pp_{Y_e}(i_e^*V)\to \pp_X(V)$ be the maps induced by $i_d$, $i_e$. Let us apply Corollary \ref{convexbdl} to $i_d$. From the fact that $\overline{M}_{0,n}(G(r,m),d)$ is irreducible and the equality $h_d^*\xi_{\pp_X(V)}=\xi_{\pp_C(i_d^*V)}$ we obtain $$(\bar{p}_X)_*\left((\bar{h}^*\alpha)\cdot[\overline{M}_{0,n}(\pp_{X}(V),\tilde{\beta_X}+qf_X)]^{\vv}\right)=N(q,d,k).$$ Similarly, by applying Corollary \ref{convexbdl} to $i_e$ we obtain $$(\bar{p}_X)_*\left((\bar{h}^*\alpha)\cdot[\overline{M}_{0,n}(\pp_{X}(V),\tilde{\beta_X}+qf_X)]^{\vv}\right)=N(q,d,k).$$ Comparing the two equalities we obtain that $N(q,d,k)=N(q,e,k)$.
\end{proof}
Let us now extend the result to a more general base $X$.
\begin{setting}\label{notatii}We consider a homogeneous space space $\pp$ and $g:X\to\pp$ be a closed embedding of a smooth projective variety $X$ in $\pp$. Let $V$ be a vector bundle on $X$ such that there exists a vector bundle $W\oplus \C_{\pp}$ on $\pp$ with $V=g^*(W\oplus \C_{\pp})$ and let $p:\pp_X(V)\to X$ be the associated projective bundle. In notations as above we have an induced map $\bar{p}_X:\overline{M}_{0,n}(\pp_{X}(V),\tilde{\beta}+qf_X)\to\overline{M}_{0,n}(X,\beta)$. \end{setting}
\begin{definition} Let $V$ be a vector bundle on a smooth projective variety $X$ and let $i:\pp^1\to X$ be a closed embedding of a projective line in $X$. Let $i^*V\simeq\mathcal{O}(d_1)\oplus...\oplus\mathcal{O}(d_{r-1})$ for some $d_1,...,d_{r-1}\in\Z$. We say that $i^*V$ is positive if $d_i\geq0$, $\forall\ 0\leq i\leq r-1$.
\end{definition}

\begin{corollary}\label{genbdl} In notations as in \ref{notatii}, let $$\alpha:=\prod_{i=1}^nev_i^*\xi^{k_i}\in A^*(\overline{M}_{0,n}(\pp_X(V),\tilde{\beta}+qf_X))$$ satisfy condition (\ref{dimension}). Suppose moreover that the restriction of $V$ to any curve of class $\beta$ is positive. Then we have that $$(\bar{p}_X)_* \left(\alpha\cdot[\overline{M}_{0,n}(\pp_X(V),\tilde{\beta}+qf_X)]^{\vv}\right)=N(q,d, k)[\overline{M}_{0,n}(X,\beta)]^{\vv},$$
for some $d=(d_1,...,d_n)$ which satisfies $\sum_id_i=\beta\cdot c_1(V)$.
\end{corollary}
\begin{proof}
 The proof follows from Proposition \ref{projb}, Corollary \ref{convexbdl} and Corollary \ref{jump}.
\end{proof}

\begin{remark}
 In Corollary \ref{genbdl} we have shown that we can compute GW invariants of projective bundles in terms of GW invariants of the base $X$ and GW invariants of a projective bundle over $\pp^1$. The latter can be analyzed using toric methods (see \cite{e1}, \cite{e2}). More precisely, we can compute in this way GW-invariants of $\pp_X(V)$ with at least $\vdim \overline{M}_{0,n}(X,\beta)$ insertions that are pull-backs from $X$.
\end{remark}

\subsection{Costello's push-forward formula}
We can use the basic properties of virtual pull-backs (push-forward and functoriality) to give a short proof of a particular case of Costello's push-forward formula in \cite{c}. We recall the set-up from \cite{c}.
\\Let us consider a cartesian diagram
\begin{equation*}
 \xymatrix{F\ar[d]_{p_1}\ar[r]^f&G\ar[d]^{p_2}\\
\mathfrak{M}_1\ar[r]^g&\mathfrak{M}_2}
\end{equation*}
such that
\begin{enumerate}
\item $f$ is a proper morphism;
\item $\mathfrak{M}_1$ and $\mathfrak{M}_2$ are Artin stacks of the same pure dimension;
\item $g$ is a DM-type morphism of degree $d$;
\item $F$ and $G$ are DM-stacks equipped with perfect relative obstruction theories $E_{F/\mathfrak{M}_1}$ and $E_{G/\mathfrak{M}_2}$ inducing virtual classes $[F]^{\vv}$ and $[G]^{\vv}$;
\item $E_{F/\mathfrak{M}_1}\simeq f^*E_{G/\mathfrak{M}_2}$.
\end{enumerate}
\begin{proposition}\label{costello} Under the assumptions above, if moreover $g$ is projective, then $f_*[F]^{\vv}=d[G]^{\vv}$.
 \end{proposition}
\begin{proof}
 As $E_{F/\mathfrak{M}_1}$ and $E_{G/\mathfrak{M}_2}$ are perfect, $p_1$ and $p_2$ induce pull-back morphisms and $E_{F/\mathfrak{M}_1}=f^*E_{G/\mathfrak{M}_2}$ implies $p_1^!$ is induced by $p_2^!$. Applying Theorem \ref{relations} (i) we get $f_*p_1^![\mathfrak{M}_1]=p_2^!g_*[\mathfrak{M}_1]$. Using the fact that $g_*[\mathfrak{M}_1]=d[\mathfrak{M}_2]$ and the definition of virtual classes we get
\begin{equation*}
 f_*[F]^{\vv}=d[G]^{\vv}.
\end{equation*}
\end{proof}
\begin{remark} In \cite{c}, Theorem 5.0.1 $g$ is not assumed to be projective. We impose this condition in order to be able to push-forward along $g$ and apply Theorem \ref{relations}. If $\mathfrak{M}_1$ and $\mathfrak{M}_2$ are DM-stacks then it is enough to assume $g$ is proper. The proof of Proposition \ref{costello} applies unchanged to this case.
\\More generally, Theorem 5.0.1 in \cite{c} follows from the proof of Theorem \ref{relations} (i) which is very similar to Costello's proof.
\end{remark}

\end{document}